\newcommand{\bmx}{\bm{x}}
\newcommand{\cC}{\mathcal{C}}
\newcommand{\cL}{\mathcal{L}}
\newcommand{\cN}{\mathcal{N}}
\newcommand{\cO}{\mathcal{O}}
\newcommand{\cS}{\mathcal{S}}
\newcommand{\cW}{\mathcal{W}}
\newcommand{\cX}{\mathcal{X}}
\newcommand{\bbC}{\mathbb{C}}
\newcommand{\bbR}{\mathbb{R}}
\newcommand{\btheta}{\bm{\theta}}
\newtheorem{proposition}{Proposition}
\DeclareMathOperator*{\argmin}{arg\,min\;}
\title{DOSnet as a Non-Black-Box PDE Solver: When Deep Learning Meets Operator Splitting}
\author[1, 2]{\small Yuan Lan}
\author[1*]{\small Zhen Li}
\author[1*]{\small Jie Sun}
\author[2,3*]{\small Yang Xiang}
\affil[1]{\footnotesize Theory Lab, Huawei Technologies, Co. Ltd., Hong Kong SAR, China}
\affil[2]{\footnotesize Department of Mathematics, The Hong Kong University of Science and Technology, Clear Water Bay, Hong Kong SAR, China}
\affil[3]{\footnotesize Algorithms of Machine Learning and Autonomous Driving Research Lab, HKUST Shenzhen-Hong Kong Collaborative Innovation Research Institute, Futian, Shenzhen, China}
\affil[*]{Corresponding authors: lishen03@gmail.com, j.sun@huawei.com, maxiang@ust.hk}
\begin{document}
\maketitle
\begin{abstract}
Deep neural networks (DNNs) recently emerged as a promising tool for analyzing and solving complex differential equations arising in science and engineering applications. Alternative to traditional numerical schemes, learning-based solvers utilize the representation power of DNNs to approximate the input-output relations in an automated manner. However, the lack of physics-in-the-loop often makes it difficult to construct a neural network solver that simultaneously achieves high accuracy, low computational burden, and interpretability. In this work, focusing on a class of evolutionary PDEs characterized by having decomposable operators, we show that the classical ``operator splitting'' numerical scheme of solving these equations can be exploited to design neural network architectures. This gives rise to a learning-based PDE solver, which we name Deep Operator-Splitting Network (DOSnet). Such non-black-box network design is constructed from the physical rules and operators governing the underlying dynamics contains learnable parameters, and is thus more flexible than the standard operator splitting scheme. Once trained, it enables the fast solution of the same type of PDEs. To validate the special structure inside DOSnet, we take the linear PDEs as the benchmark and give the mathematical explanation for the weight behavior. Furthermore, to demonstrate the advantages of our new AI-enhanced PDE solver, we train and validate it on several types of operator-decomposable differential equations. We also apply DOSnet to nonlinear Schr\"odinger equations (NLSE) which have important applications in the signal processing for modern optical fiber transmission systems, and experimental results show that our model has better accuracy and lower computational complexity than numerical schemes and the baseline DNNs.
\end{abstract}

\keywords{Deep learning\and Evolutionary PDEs \and Operator splitting \and Schr\"odinger equations}

\section{Introduction}\label{sec:introduction}
Evolutionary partial differential equations arise as mathematical models for evolving in a wide range of fields in science and engineering.
To simulate the evolution of such a real system by solving the associated evolutionary PDE is typically computationally expansive.
One engineering application is nonlinear compensation in optical fiber \cite{Li2008ElectronicPO, Ip2008CompensationOD, Asif2011DigitalBP},
which aims to recover the clean signal from the distorted signal induced by long-distance propagation in fiber. 
Mathematically, the compensation can be obtained by solving an inversed nonlinear Schr\"{o}dinger equation (NLSE). The challenge is how to obtain an accurate solution to the inversed NLSE with low computational complexity to meet the engineering needs.

One classical numerical approach to solving the inversed NLSE over a long distance is the split-step Fourier method (SSFM) \cite{Weideman1986SplitstepMF, Hardin1973ApplicationOT}, which is based on operator splitting \cite{strang1968construction}. 
In one calculation step, the operator splitting scheme splits the inversed NLSE into linear and nonlinear parts, then combines the analytical solution of each part to form the final solution. However, the idea of splitting brings the splitting error controlled by step size. 
 A smaller step size yields a smaller splitting error, nevertheless, results in more computation steps thus makes it impractical in real-time implementation \cite{Asif2011DigitalBP}. In the past two decades,  various works have been proposed to reduce the computational complexity in SSFM by balancing the step size and splitting errors \cite{Du2010ImprovedSC,Lin2010CompensationOT,Asif2011LogarithmicSB,Fougstedt2017TimedomainDB,Li2011ImplementationEN,Yan2011LowCD,Liang2015CorrelatedDB,Rafique2011ImpactOS}. 
Some works apply  well-designed combinations of the linear and nonlinear operators to pursue a larger step size meanwhile maintain an acceptable splitting error \cite{Asif2011LogarithmicSB, Du2010ImprovedSC, Lin2010CompensationOT}. In modified SSFM (M-SSFM) \cite{Lin2010CompensationOT}, they introduce a coefficient to shift the position of the nonlinear operator calculation point along with the optimization of linear operator. By balancing the combination between the linear and nonlinear operator, M-SSFM reduces the computational cost. Similarly, logarithmic SSFM (L-SSFM) \cite{Asif2011LogarithmicSB} achieves high computational efficiency by introducing a logarithmic step-size assignment for the linear and nonlinear operators according to the exponential decay of signal power. Another class of methods reduce the computational complexity by modifying the composition inside linear and nonlinear operators of SSFM  \cite{Fougstedt2017TimedomainDB,Li2011ImplementationEN,Rafique2011ImpactOS,Yan2011LowCD,Liang2015CorrelatedDB}. For example, Correlated-DBP (CBP) \cite{Li2011ImplementationEN, Rafique2011ImpactOS} modifies the original nonlinear operator by adding an additional correlation term with adjacent symbols. It considers the physical nature that the nonlinear distortion imprinted on one symbol is related to the pulse broadening of neighboring symbols, thus achieves better computation accuracy with a large step size. Besides that, Perturbation BP (PBP) \cite{Yan2011LowCD, Liang2015CorrelatedDB} applies the nonlinear perturbation analysis and adds a first-order perturbation term to nonlinear operators. Although those works reduce the computational complexity of SSFM, it remains challenging to fulfill the increasing demand for transmission rate and distance requirements in the engineering application. 

The last decade has seen a significant amount of research on solving partial differential equations (PDEs) using deep neural networks (DNNs) \cite{Beck2020AnOO, Raissi2019PhysicsinformedNN, Lu2019DeepONetLN, Han2018SolvingHP}. One of the typical DNN-based PDE solvers is differential operator approximation (DOA) \cite{Lu2019DeepONetLN, Li2021FourierNO}, in which the trained DNN serves as a surrogate operator for the PDE and infers the solution at the target time from arbitrary initial condition.
 The performance of DOA depends on the bias and variance of models rather than the step size in numerical schemes \cite{hastie2009elements}. This shows that DNNs can achieve large step-size evolution of PDEs.
 For nonlinear compensation tasks in optical fiber, some works have been proposed  using the idea of DOA \cite{fujisawa2021nonlinear,zhao2020low,kamalov2018evolution,hager_deep_2018, Hger2018NonlinearIM,fan_advancing_2020}. Among them, the ``black-box"-type models mitigate the nonlinear distortion directly from data using large amounts of parameters. To control the computational complexity of such models, weight pruning \cite{fujisawa2021nonlinear}, recurrent structure \cite{zhao2020low}, or shallow neural network \cite{kamalov2018evolution} are used. In contrast, the SSFM-based models design the neural network with prior knowledge from the inversed NLSE \cite{hager_deep_2018, Hger2018NonlinearIM,fan_advancing_2020} using much fewer parameters.  
They unroll the linear and nonlinear steps of SSFM as layers in a neural network. In the nonlinear step,  the nonlinear operator of SSFM is fixed as the activation function in the network. Meanwhile, convolution layers with small kernel sizes replace the dense linear operators in SSFM and the parameters of the convolutions are optimized to minimize the loss function. 
A limitation of the SSFM-based models is the predetermined initialization in convolutional layers.
Usually, those models use the truncation version of the original linear operators of SSFM as the initialization of convolution layers to accelerate the optimization. However, the predetermined initialization introduces extra truncation errors during training. Moreover, the predetermined initialization pre-specifies  the step size of the operator to be approximated by each layer of the network. Thus it loses the flexibility to learn the adaptive step size from data and limits the network’s performance.

In this paper, we develop an adaptive-step-size  SSFM-based neural network, \textit{Deep Operator Splitting Net (DOSnet)}, to solve the evolutionary PDEs.
We introduce the prior knowledge of the PDE in the design of the network by the dynamical structure, \textit{Autonomous Flow (Autoflow)}, which consists of several blocks with the same dimension for the input and output to mimic the evolution of the PDE. Compared to the standard neural networks, which transform the inputs into features in higher dimensions by large amounts of parameters, Autoflow has much fewer parameters by restricting the sizes of intermediate outputs.
Compared to the previous SSFM-based neural networks, we do not use the predetermined initialization that restricts the step size represented by each block. Instead, we apply random initialization for the parameters inside Autoflow. Thus, each block inside Autoflow learns an adaptive step-size operator. Each block of Autoflow is an operator splitting block (OSB) which contains a series of linear layers and the particular nonlinear activation functions from the original PDE. This special activation function from the PDE reflects the properties of the PDE and increases the expressive power of the network. See Fig.1 for the structure of AutoFlow and OSB of our DOSnet and comparison with the structure of a standard neural network.

Experimentally, we apply this method to several different classes of PDEs. We first validate the Autoflow structure on two types of linear PDEs as toy examples. We observe that the transition states inside Autoflow follow the true physical law in the linear models. For this observation, we give a mathematical explanation based on the behavior of weights of Autoflow during training. Then, we test our algorithm on the Allen-Cahn equation. Our result shows that DOSnet has high accuracy in predicting the solutions during the long-term evolution.
Moreover, the intermediate outputs of Autoflow represent states on the true trajectory of the evolution of the system, which cannot be observed in standard neural networks. We apply our DOSnet to the nonlinear Schr\"{o}dinger equation for the engineering application of nonlinear compensation in optical fiber. For a single-channel single-polarization 16 QAM system over optical fiber of $20 \times 80$ km under 100 GBaud/s transmission rate, experimental results show that our model has better accuracy and lower computational complexity than numerical schemes and the baseline DNNs.

Finally, we briefly review the classical operator splitting method \cite{strang1968construction, quispel2002splitting}. 
In this paper, we focus on the autonomous evolutionary PDEs with decomposable operators. It can be generally written as
\begin{equation}\label{eq:PDE}
    u_{t} = \cL u + \cN u,
\end{equation}
where $\mathcal{L}$ and  $\mathcal{N}$ are the linear and nonlinear part of the PDE, respectively. With some given initial condition $u\left(\bmx,0\right) = u_0$, the exact solution can be written as $u(\bmx,t) =  e^{t(\cL+\cN)}u_0$. One numerical scheme that utilizes the decomposable property to solve Eq.\eqref{eq:PDE} is operator splitting \cite{strang1968construction,quispel2002splitting}. The key idea of operator splitting is to split Eq.\eqref{eq:PDE} into two simpler sub-equations:
\begin{equation}\label{eq:splitting-PDE}
\left\{  
\begin{aligned}
    &u_{t} = \cL u \\
    &u_{t} = \cN u,
    \end{aligned}
\right.
\end{equation}
and the two sub-equations are solved in  closed forms on small time intervals. 
Specifically, suppose that we want to solve Eq.\eqref{eq:PDE} up to time $T$. 
The time interval $[0,T]$ is first divided into sub-intervals $0=t_{0}<t_{1}<\dots<t_{N}=T$ such that $t_{n}-t_{n-1}=\tau$ for all $n=1,2,\dots, N$. On each sub-interval, instead of solving the entire equation Eq.\eqref{eq:PDE}, one can solve the two equations  in Eq.\eqref{eq:splitting-PDE} successively. Hence the final solution $u(\bmx,T)$ is expressed as a stack of alternating compositions of linear and nonlinear evolution operators

    \begin{align} 
    u(\bmx,T)
    \approx  e^{\tau\cN} e^{\tau\cL} \dots e^{\tau\cN} e^{\tau\cL}   u_{0} \label{eq:gcOSN},
    \end{align}
The accuracy of the operator splitting depends on both the arrangement of the linear and nonlinear operators and the temporal step size $\tau$. For example, the plain splitting $e^{\tau\cN} e^{\tau\cL}$ 
has an error $O(\tau)$, while a more symmetric Strang splitting \cite{strang1968construction} $e^{\frac{\tau}{2}\cL} e^{\tau\cN}e^{\frac{\tau}{2}\cL}$ results in an error $O(\tau^{2})$. The challenge of  the classical operator splitting is how to balance the solution accuracy and computational cost.




\begin{figure}[htbp]
	\centering
	\includegraphics[width=\linewidth]{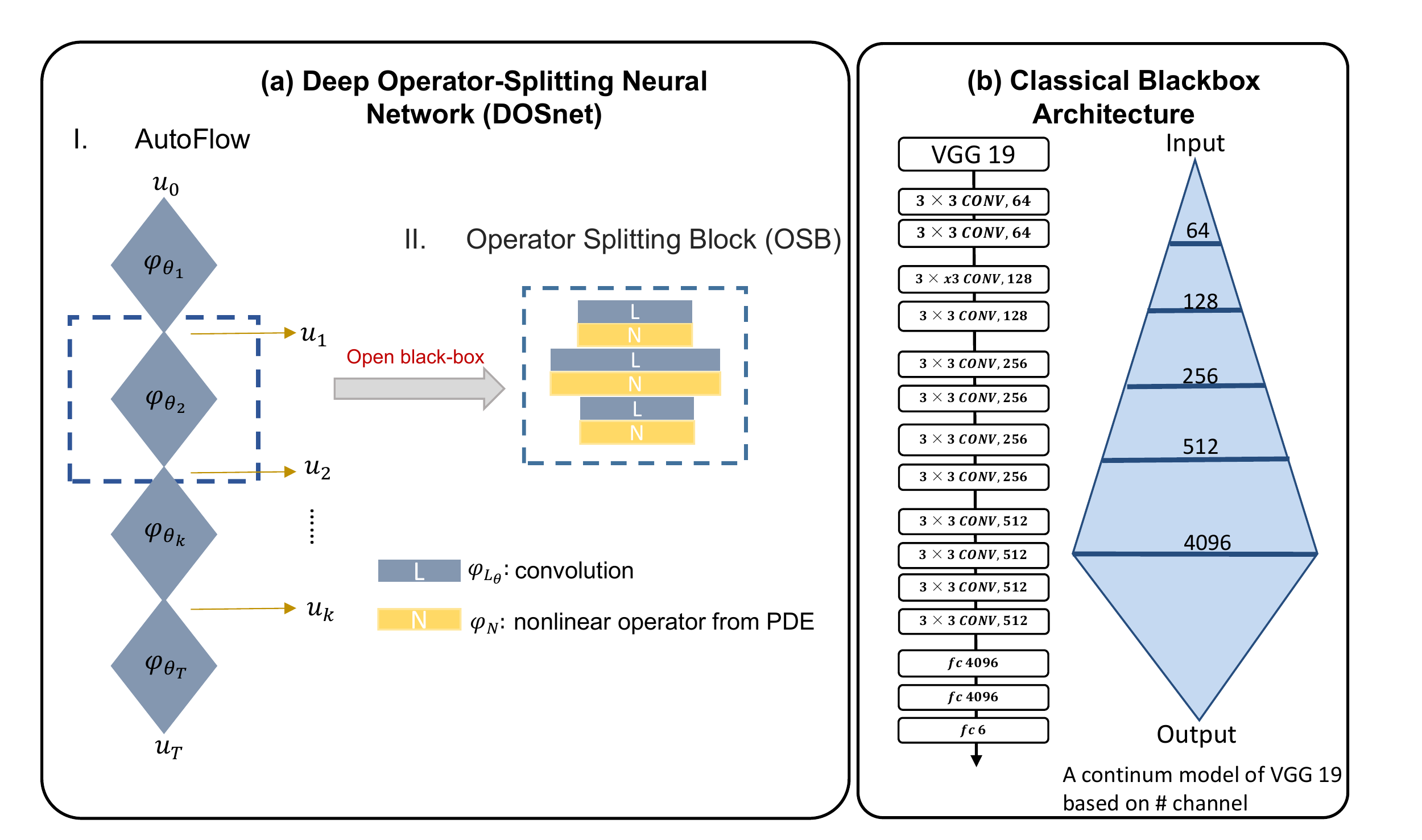}
	\caption{Architecture of DOSnet as depicted in (a), where network structure is designed by a hierarchy of two level: in the coarse-grained level, an Autoflow structure is used to ensure that each block functions as an autonomous flow with identical input and output dimensions; whereas in the finer level inside each block, a cascade of learnable linear layer with  nonlinear layers is imposed whose nonlinear functioning comes from the underlying equation. In contract, a standard DNN model such as the continum model of VGG19 \cite{simonyan2014very} as shown in (b) is characterized by completely flexible layers between the input and output: in this particular example, 
	layers maps the original input into the higher dimensional feature space (such as 64 dim, 128 dim or higher).}
	\label{fig:modelpipline}
\end{figure}

\section{Results}\label{sec:result}
\subsection{Deep Operator-Splitting Neural Network (DOSnet)}
We propose an adaptive-step-size, low computational burden \textit{Deep Operator Splitting Net (DOSnet)} to solve Eq.\eqref{eq:PDE} over a long time $T$. The key to our method is a general dynamical architecture we call Autonomous Flow (Autoflow), describing the action of
the evolution operators. We design this structure to reduce the size of network by restricting the intermediate outputs of network. Autoflow consists of multiple learnable blocks called operator splitting block (OSB) that increase the expressive power of DOSnet in modeling the operator of nonlinear PDEs. Below we describe our model in detail.
\subsubsection{Dynamical Architecture: Autoflow} \label{sec:Autoflow}
The aim of the proposed Autoflow is to obtain the solution $u(\bmx,t) \in \cX$ after a long time $T$ given the initial solution $u(\bmx,t=0)=u_0  \in \cX$, where $\cX$ is a function space of states.  
Instead of solving the Eq.\eqref{eq:PDE} directly, we employ the learnable operators $\psi_{\btheta}: \cX \rightarrow \cX$, parmaeterized by $\btheta$, to approximate the true operator in Eq.\eqref{eq:PDE}. 

The structure of Autoflow is a composition of learnable operators $\{\psi_{\btheta_i}\}_{i=1}^{M}$ as shown in Fig. \ref{fig:modelpipline}(a), where $\psi_{\btheta_i}$ is a neural network block with parameters $\btheta_i$ and $M$ denotes the number of blocks inside Autoflow. The output of Autoflow can be written as,
\begin{equation} \label{eq:Autoflow}
	\psi_{\btheta_T}\left(u_0\right) = \psi_{\btheta_M}\circ\psi_{\btheta_{M-1}}\circ\cdots \circ \psi_{\btheta_1}\left(u_0\right),
\end{equation}
The learnable parameters $\btheta = \{\btheta_i\}_{i=1}^{M}$ inside Autoflow can be optimized  via minimizing the mean square loss
\begin{equation} \label{eq:loss_func}
	\btheta^{\star} = \argmin_{
	\btheta} \cL\left(\btheta\right) = \frac{1}{K}\sum\limits_{i=1}^{K}\left(\psi_{\btheta_T}\left(u_0^{\left(i\right)}\right)-u^{\left(i\right)}\left(\bmx,T\right)\right)^2,
\end{equation}
where $K$ denotes the total number of data and $u^{\left(i\right)}$ is the $i$ th data. 

This Autoflow architecture is inspired by the existence of the evolution flow \cite{lang2012differential} for PDEs describing physical evolution of states $u\in\cX$, where $\cX$ is the function space of states. The evolution flow underlying Eq.\eqref{eq:PDE} describes a family of operators $\{\phi^t: \cX \rightarrow \cX\}$ parameterized by $t\in \bbR$ such that for any $u\in \cX$, there is
\begin{equation} \label{eq:property_flow}
	\begin{aligned}
		\phi^{0}\left(u\right)  = u_0,\quad
		\phi^s\left(\phi^t\left(u\right)\right) = \phi^{t+s}\left(u\right).
	\end{aligned}
\end{equation} 
Due to the additive group law shown in Eq.\eqref{eq:property_flow}, the true solution at time $T$, $u\left(\bmx,T\right)$ can be written in the form of 
\begin{equation} \label{eq: anlytic_solution}
			 u\left(\bmx,T\right) = \phi^{\tau_N}\circ\phi^{\tau_{N-1}}\circ\cdots\circ\phi^{\tau_1}\left(u_0\right) 
			 =\phi^{\tau_1+\tau_2+\cdots+\tau_N}\left(u_0\right) = \phi^T\left(u_0\right) 
\end{equation}
such that $\tau_1+\tau_2+\cdots+\tau_N = T$, where $N$ denotes the number of operators. In a traditional numerical scheme, $N$ is set as a large number; in other words, each operator $\phi^{\tau_i}, i=1,\cdots, N$, represents the mapping between two states over a small time step. The smaller time step allows the approximation with lower errors. However, large $N$ brings extensive computational complexity. In contrast, if $N =1$, the operator over a long time $T$ is highly nonlinear and thus is difficult to approximate directly. In this case, although a  deep neural network has powerful expressivity, it still needs enormous parameters to approximate $\phi^T$. Therefore, the desired network should have a good balance between the number of operators and the size of the networks.

The advantage of Autoflow is that it provides a balance between computationally extensive numerical schemes and large-scale deep neural networks. Specifically, it is designed to mimic the additivity of evolution flow  to reduce the  size of the neural network. One key feature of Autoflow that distinguishes it from standard DNN architectures is  that $\psi_{\btheta}$ maps $u$ into a function in the same state space $\cX$, i.e. $\psi_{\btheta}\left(u\right)$ has the same dimension as $u$. In a standard DNN, the operator inside each layer ofen maps the input function to a function in some space with different dimension (Fig.~\ref{fig:modelpipline}), and the network approximates the operator between the input state and the target state directly. While in an $M$-layer Autoflow, the output of $\psi_{\btheta_i}, i = 1,\cdots,M,$ can be interpreted as an intermediate state of the evolution process, which means that $\psi_{\btheta_i}$ approximates a particular $\phi^{\tau}$ for some $\tau$. In Autoflow, the evolution time $\tau$ is implicitly encoded by the parameter $\btheta_i$, and is learned adaptively from data. The number of operators $M$ inside AutoFlow can be much smaller than $N$ in  Eq.\eqref{eq: anlytic_solution}. Therefore, $\psi_{\btheta_i}$ learns a larger time-step underlying operator, and leads to lower computational complexity than a traditional numerical scheme with small time-step. Furthermore, the  design of intermediate output states enables  fewer parameters in the network than a standard DNN.


\subsubsection{Building blocks: Operator Splitting Blocks}
Each $\psi_{\btheta_i}$ inside Autoflow is a single operator splitting block (OSB), which contains a series of linear layers $\psi_{\cL_{\btheta_i}}$ and the special nonlinear activation functions $\psi_{\cN}$ from the original PDE; see Fig. \ref{fig:modelpipline}(a). The $i$ th OSB $\psi_{\btheta_i}$ in Autoflow is,
\begin{equation}
    \psi_{\btheta_i} = \psi_{\cN_i^n}\psi_{\cL_{\btheta_i}^n}\cdots \psi_{\cN_i^2}\psi_{\cL_{\btheta_i}^2}\psi_{\cN_i^1}\psi_{\cL_{\btheta_i}^1}
\end{equation}
where $\psi_{\cL_{\btheta_i}^{j}}, j=1,\cdots,n,$ denote  the convolutional layers with the learnable parameters, $\psi_{\cN_i^{j}} = e^{\eta_i^{j}\cN}$ are the special nonlinear activations with $\cN$ being the nonlinear operator in Eq.\eqref{eq:PDE}, $\eta_i^{j}$ are learnable scalars, $n$ is the number of convolutional layers inside one OSB. 
The last convolutional layer in $i$ th OSB $\psi_{\btheta_i}^n$ maps the features to the output with the same size as the input of this OSB.

We discuss the behavior of our network with only one OSB with one pair of linear layer and nonlinear activation (i.e., n=1). For simplicity of notations, below we omit the superscript and subscript of $\psi_{\btheta}$ and $\psi_{\cN}$.
Supposing that we use this network to approximate the evolution from initial state $u_0$ to the solution $u_{T_{1}}$,
the solution at time $T_1$ can be approximated as
\begin{equation} \label{eq:1_OSB}
    u_{T_1} \approx  \psi_{\cN}\psi_{\cL_{\btheta}}u_{0}. 
\end{equation}
Rather than the commonly used activation functions such as ReLU \cite{glorot2011deep} and Sigmoid function \cite{lecun2012efficient}, we choose $\psi_{\cN}u = e^{T_1\cN}(u)$ in OSB, which is the closed-form solution of the subequation $\frac{du}{dt} = \cN u$ with $\cN$ being the nonlinear operator from the PDE \eqref{eq:PDE}. The reason to use this PDE-based activation function is that it reflects the properties of the PDE. For example, in the Allen-Cahn equation to be discussed in Section \ref{sec:ac}, the nonlinear operator $\cN u  =  F'(u)$, where $F$ is the double  well potential that attains its global minimum value at $u =  \pm 1$. Our PDE-based activation function $\psi_{\cN}$ guarantees that the fixed points $u=\pm 1$ are kept after the activation function is applied, which will significantly accelerate the convergence of training.







However, the nature of splitting cause the splitting errors which are unavoidable by both the traditional numerical method in Eq.\eqref{eq:gcOSN} and OSB. Unlike a traditional numberical method, OSB can further reduce the splitting error by training. 
In fact, using the Baker–Campbell–Hausdorff (BCH) formula \cite{bonfiglioli2011topics},  the operator that $\psi_{\cL_{\btheta}}$ needs to approximate is,
\begin{equation} \label{eq:inverse_nl_op}
\begin{split}
       e^{-T_1\cN}e^{T_1(\cL+\cN)} 
       = e^{T_1\left(\cL+\frac{T_1}{2}[\cN, \cL]+\cO(T_1^2)\right)} 
\end{split}
\end{equation}
From Eq.\eqref{eq:inverse_nl_op}, the error is mainly induced by the Lie bracket $[\cN, \cL] = \cN\cL -\cL\cN$ for a small $T_1$.  We discuss the Lie bracket by the following three cases.
First, if $[\cN, \cL]=0$, then the splitting error vanishes, and the numerical operator splitting can obtain the exact solution with the arbitrary step size. However, for most of the PDEs, $[\cN, \cL]  \neq 0$. Secondly, when $[\cN, \cL]$ is linear, then one OSB is good enough to approximate  $e^{T_1(\cL+\cN)}$. Finally, when $[\cN, \cL]$ is nonlinear, the capacity of the linear layer inside only one OSB is limited for the approximation of $[\cN, \cL]$. Fortunately, Autoflow structure has the stacking of multiple OSBs, thus provide a nonlinear approximation for $[\cN, \cL]$. In summary, inside one OSB,  $\psi_{\cL_{\btheta}}$ is not only used to approximate the linear operator $\cL$ but also reduce the splitting error by providing a linear approximation for $[\cN, \cL]$. The nonlinearity of $[\cN, \cL]$ can be approximate by the stacking of multiple OSBs.

\subsubsection{Straightforward Adaptability}
Besides the task of approximating the solution at a specific time, our proposed network is also flexible and adaptable for some additional tasks. The design of intermediate output in Autoflow allows us to replace the default convolutional layers with  weight matrices that are consistent with the properties of the PDE.
For example,  in a NLSE, we may replace  $\psi_{\cL_{\btheta}}$ with a learnable unitary matrix \cite{jing2017tunable,kiani2022projunn} to guarantee the stability of the long-time prediction. This is validated numerically in Appendix \ref{appendix:schodinger}. Moreover, for those PDEs satisfying a certain symmetry, $\psi_{\cL_{\btheta}}$ with symmetric weight matrices can be employed to restrict the intermediate output of Autoflow close to an intermediate state on the actual solution trajectory. 

\subsection{Validation of the AutoFlow Structure} \label{sec:3toys}
We first validate the proposed Autoflow structure by considering systems without nonlinearity as benchmark problems.

\noindent\textbf{Example 1: Advection equation.}
Our first example is a standard one dimensional (1D) linear advection equation
\begin{equation} \label{advectioneq}
	\left\{
	\begin{array}{lr}
		u_t + u_x = 0, \quad x \in [-\pi,\pi],\, t>0, &  \\
		u\left(x,0\right) = \sum_{k=1}^m c_k \sin\left(kx\right)+q_k\cos\left(kx\right),&\\
		u\left(\pi,t\right) = u\left(-\pi,t\right), &  
	\end{array}
	\right.
\end{equation}
where $c_k, q_k \sim \mathcal{N}(0,\,1)$ with $\mathcal{N}(0,\,1)$ being normal distribution and $m=10$. 
We discretize the spatial domain $ [-\pi,\pi]$ into 200 uniform grid points. In order to predict the behavior of the solution $u$ from 0s to 0.3s, we generate 5000 data pairs of $u\left(x,0\right)$ and $u\left(x,0.3\right)$ from the analytical solution as the input and ground truth of our network, respectively. These 5000 data are divided into training data of 3750 and test data of 1250.

\noindent
\textbf{Example 2: Diffusion equation.}
The second example is a 1D diffusion equation
\begin{align} \label{eq:diffusion}
    u_t = u_{xx}, \quad x \in [-\pi,\pi], \, t>0,
\end{align}
with the same initial and boundary conditions as those in Eq.\eqref{advectioneq}. 
In this example, we examine the evolution of this system from 0s to 0.03s. The generation of data is the same as that in  Example 1.

In all two examples, we use a 3-block linear Autoflow. Our model is trained using the Adam optimizer through $100$ epochs with a learning rate of $10^{-3}$ and $L_2$ regularization of $10^{-4}$. Inside Autoflow, each block contains one convolutional operator of kernel size of $21$. Constant initialization of weights $\frac{1}{k}$ is used in our model.

Results obtained using a 3-block linear DOSnet on these two linear examples are shown in Figs.~\ref{fig:toy_acc} and \ref{fig:3toysresult}. We observe that DOSnet has a solid capacity to approximate the solution of the PDEs at a specific time from the results in Fig.~\ref{fig:toy_acc}. Besides, one key feature that distinguishes DOSnet from the standard DNNs is that the AutoFlow structure allows the linear DOSnet to perform as a numerical operator with a fixed step size.
From the results in the left column of Fig.~\ref{fig:3toysresult}, we observe that the intermediate outputs of DOSnet characterize the transition states at the referenced time levels in the original PDEs. The referenced time of the exact solutions is obtained by calculating the $L_2$ losses between the intermediate outputs of the DOSnet and the exact solutions of the original PDE in the whole time trajectory and choosing the corresponding time of the exact solution with the minimum losses. Moreover, it shows that the referenced time matched by the intermediate outputs of DOSnet divides the whole time trajectory equally. Also, the profiles of the weights in the third column indicate that the weights of different blocks in DOSnet converge to be the same after training. Therefore, the same weight inside each block of DOSnet and the intermediate outputs with equal time steps demonstrate that the linear DOSnet performs as a numerical operator with a fixed step size. We provide a theoretical analysis for these behaviors of weights in Appendix \ref{appendix:proof}. 


\begin{figure}
	\centering
	\includegraphics[width=1\textwidth]{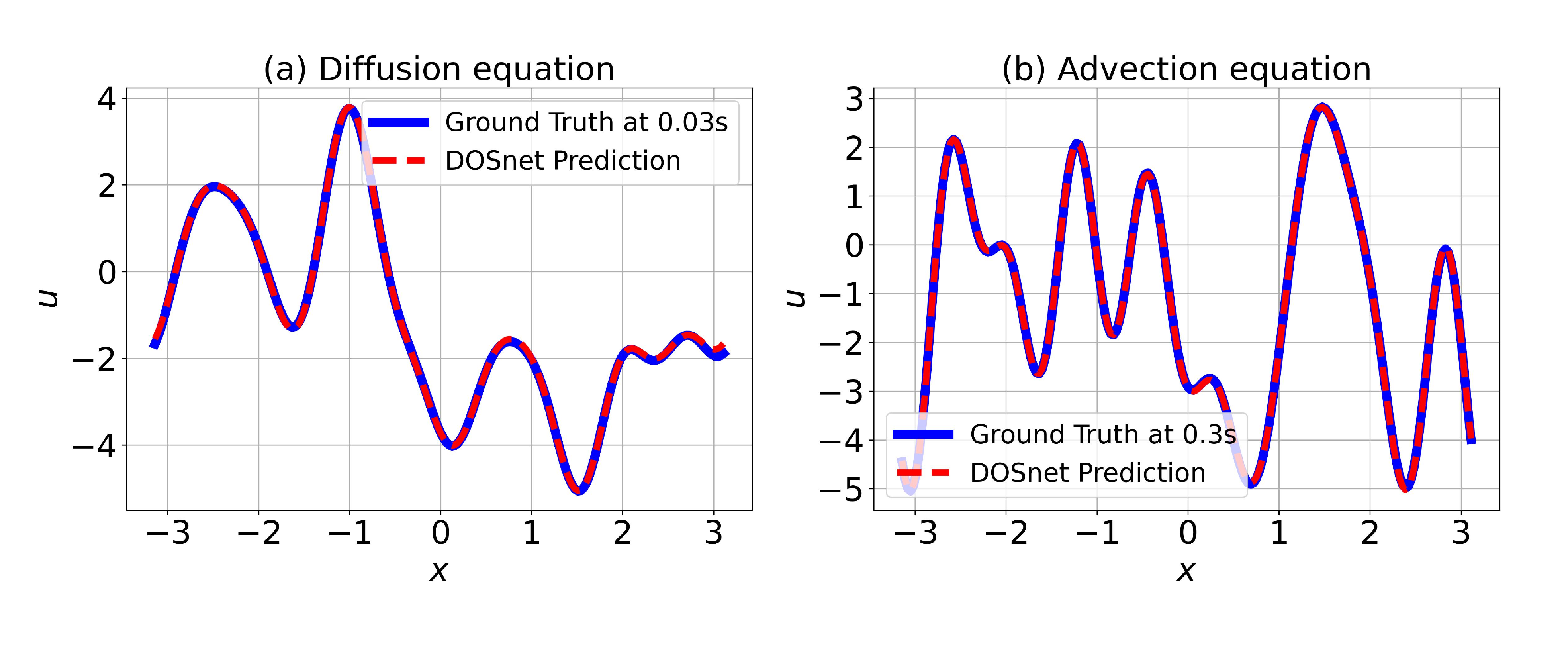}
	\caption{Predictions of a 3-block DOSnet (red dashed line)  on (a) diffusion equation, (b) advection equation compared with their ground truths (blue solid line) at the target time.}\label{fig:toy_acc}
 \end{figure}
 
\begin{figure}
	\centering
	\includegraphics[width=1\textwidth]{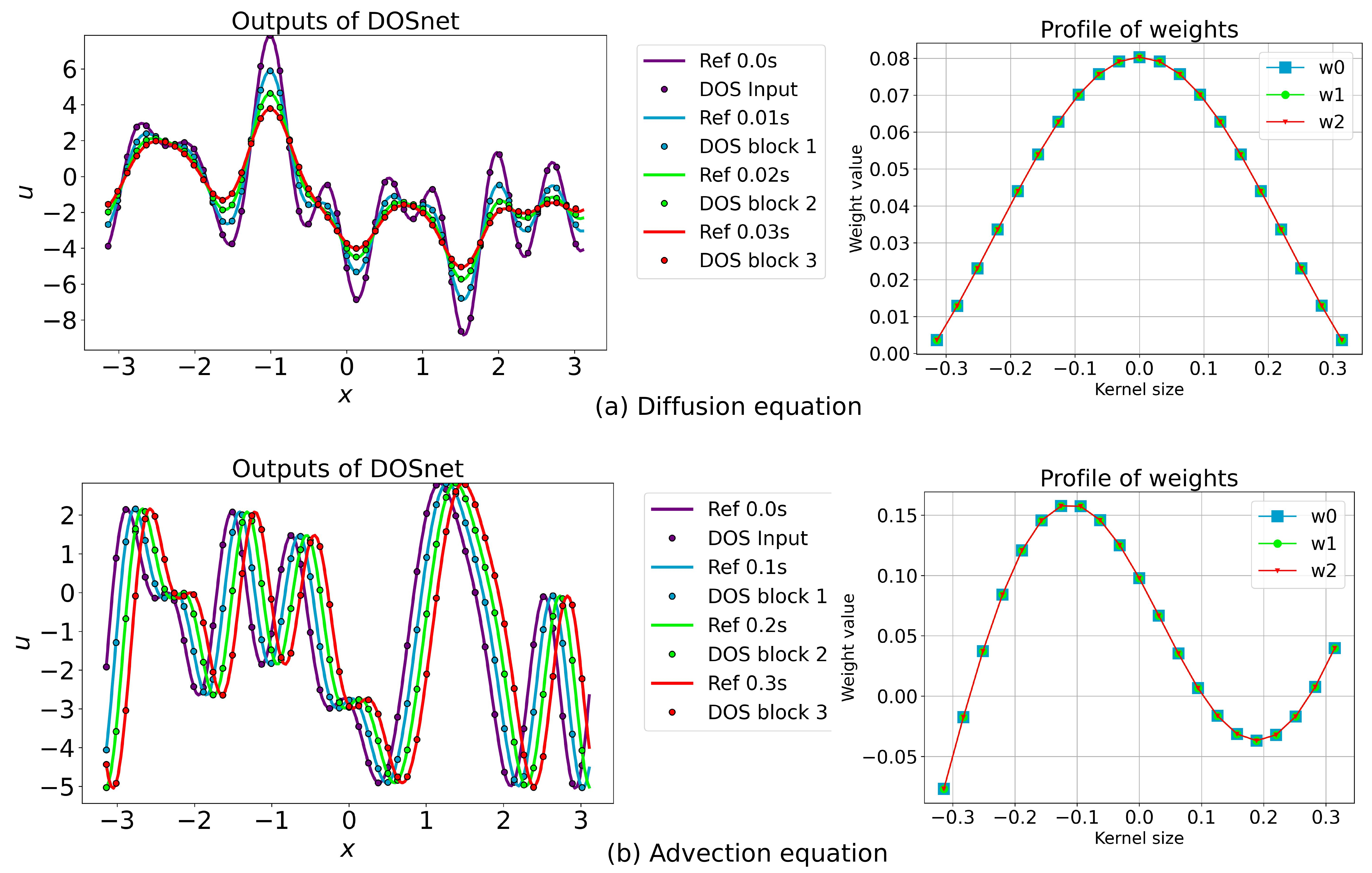}
	\caption{Intermediate outputs and weight profiles of a 3-block DOSnet on (a) diffusion equation, (b) advection equation. 
 In the left column, comparisons between outputs from the intermediate blocks of DOSnet (dashed line) and the exact solutions at the referenced time (solid line) are visualized. In the right column, the converged weights of each block in DOSnet after training are visualized.}
	\label{fig:3toysresult}
\end{figure}

\subsection{Application: Allen-Cahn equation} \label{sec:ac}
In this subsection, we apply DOSnet to the Allen-Cahn equation, which is a reaction-diffusion equation that describes the process of phase separation and has been widely used as a phase field model for the interface dynamics \cite{du2020phase}. We consider a two dimensional Allen-Cahn equation with periodic boundary condition,
	\begin{equation}
		\label{eqn:allen_eq}
		\begin{split}
			&\partial_t u\left(\bmx,t\right) - \epsilon^2 \nabla^2 u\left(\bmx,t\right) + [u\left(\bmx,t\right)]^3-u\left(\bmx,t\right) = 0 , \quad  \bmx\in [-1,1]^2,\,t>0, \\
&u\left(\bmx,0\right) = u_0\left(\bmx\right), \quad \bmx \in [-1,1]^2.
		\end{split}
	\end{equation}
In our calculations, the initial data $u_0$ is given by a Fourier series whose frequencies are less than 8 with random coefficients sampled from the Gaussian distribution $\mathcal{N}\left(0,I\right)$ , 
and $\epsilon$ is a parameters which indicates the width of region between phase separation. The obtained results using our DOSnet will be compared with those obtained by using the numerical method of the symmetrized Strang splitting scheme \cite{strang1968construction}. See Appendix \ref{app:numerical_ac} for the formulation of this numerical scheme being used. The numerically "exact" solutions are obtained by using this splitting scheme using a fine mesh and small time step. 



We first examine the effect of model structure on accuracy. The accuracy of a neural network depends on are two factors: (1) network architecture (DNN or Autoflow), (2) nonlinear activation (ReLU or OSB), out of which Autoflow and OSB are used in our DOSnet.
Table \ref{table: test and infer error}  shows the test errors at time $T$ and $2T$ under four configurations that are combinations of the above two factors. The results show that the configuration, Autoflow+OSB, achieves the lowest errors in predicting the solutions at time $T$ and $2T$. 

We further conduct the $2^2$ full factorial design to analyze the results quantitatively and quantify how different model settings affect the network performance. Note that the full factorial design is an experiment used to evaluate the effect of factors and interaction between factors on the response variable. The factors and corresponding levels are listed in Table \ref{table:factorial design}. We assume that the relation between the test error $y$ and each factors can be described by a regression model: $\log(y) = q_0 + q_Ax_A + q_Bx_B + q_{AB}x_{AB}$, where $q$ is the effect of the corresponding factor, $x_A, x_B, x_{AB}$ are the levels of factors A (network architecture), B (nonlinear activation), and AB (interaction), respectively, as  listed in Table \ref{table:factorial design}. The obtained effects  at time $T$ and $2T$ are shown in Table \ref{table:test Effect Estimate}. As the results shown in in Table \ref{table:test Effect Estimate}, for the prediction at time $T$, the interaction factor $AB$ contributes the largest effect $q_{AB} = -1.2480$ to the error $\log(y)$. That means the configuration with the level $x_{AB} = +1$ should be chosen to reduce the error. Therefore, it is necessary to use the combination Autoflow+OSB/ DNN+ReLU in order to give the lowest errors in the prediction at time $T$. Similarly, for the inference at time $2T$, the nonlinearity with the factor $B$ has the most significant effects $q_B = -1.7806$. Therefore, the configuration with level $q_B = +1$, i.e., OSB, is the best choice for the prediction in time $2T$. By these two conclusions, the Autoflow+OSB combination adopted in our DOSnet is the best configuration.
\begin{table}[h]
	\renewcommand\arraystretch{1.5}
	\centering
	\scalebox{1}{
		\begin{tabular}{ccccccccc} %
			\hline\hline
			\multicolumn{3}{c}{\multirow{2}*{Method}}& \multicolumn{2}{c}{Time $T$}& &\multicolumn{2}{c}{Time $2T$}\\
			\cmidrule{4-8}
			\multicolumn{3}{c}{}&DNN&Autoflow&&DNN&Autoflow&\\  %
			\hline %
			\multicolumn{3}{c}{ReLU}&0.01024& 0.1875 & & 1.4330& 0.41988& \\   %
			\multicolumn{3}{c}{OSB}&0.01825&\textbf{0.00226} &&0.08793&\textbf{0.00552}&  \\
			
			\bottomrule %
	\end{tabular}}
	\caption{Relative errors of four model settings at time $T$ and time $2T$.}
	\label{table: test and infer error}
\end{table}

\begin{table}[h]
\renewcommand\arraystretch{1.5}
\centering
\scalebox{1}{
    \begin{tabular}{ccc}
        \hline\hline
        Factor/ Level & +1 & -1 \\
        \midrule
        A: Network & Autoflow & DNN \\
        B: Nonlinearity & OSB & ReLU \\
        AB: Interaction & Autoflow+OSB/DNN+RELU
        
        & Autoflow+RELU/DNN+OSB 
        
        \\
        \bottomrule
\end{tabular}}
\caption{$2^2$ Full factorial design.}
\label{table:factorial design}	
\end{table}

\begin{table}[h]
\renewcommand\arraystretch{1.5}
\centering
\scalebox{1}{
\begin{tabular}{ccc}
    \hline\hline
    {Factor/ $q$}& Time $T$ & Time $2T$ \\
    \midrule
    A: Network & 0.2046 & -0.9989 \\
    B: Nonlinearity & -0.9601 & \textbf{-1.7806} \\
    AB: Interaction &\textbf{-1.2490}
    & -0.3851	\\
    \bottomrule
\end{tabular}}
\caption{Effect estimate in the $2^2$ full factorial design.}
\label{table:test Effect Estimate}
\end{table}

Next, we explore generalization of the four configurations in a longer-term evolution. As shown by the pipeline in Fig.\ref{fig:pip-long-evol-main-text}, our model is trained from $u_0$ to $u_T$, and then this learnt model is used to iteratively inference the evolution for time steps, i.e., $u_T$ to $u_{2T}$,  $u_{2T}$ to $u_{3T}$, ......, and $u_{7T}$ to $u_{8T}$, where $T = 5s$. Then, we record the inference errors at $u_T, u_{2T},\cdots,u_{8T}$.  The inference errors at next 8 time steps after the trained one are shown in Fig.\ref{fig:4comp-long-error}, and their visualization results in Fig.\ref{fig:vis-long-generalization-main-text}. From these results, we can observe that DOSnet always achieves the best performance in predicting the true evolution in Allen-Cahn model from $T$ to $8T$. The error of the standard setting DNN+RELU exhibits exponential increasing in the long-term inference (Figure \ref{fig:4comp-long-error} (4)). While if any one factor in model is changed (Figure \ref{fig:4comp-long-error} (1,2,3), the curves of inference errors are linear, with a large drop in error, among which the Autoflow+OSB combination achieves the lowest error $0.022$ even at time $8T$. 
\begin{figure}[h]
    \centering
    \includegraphics[width=0.8\linewidth]{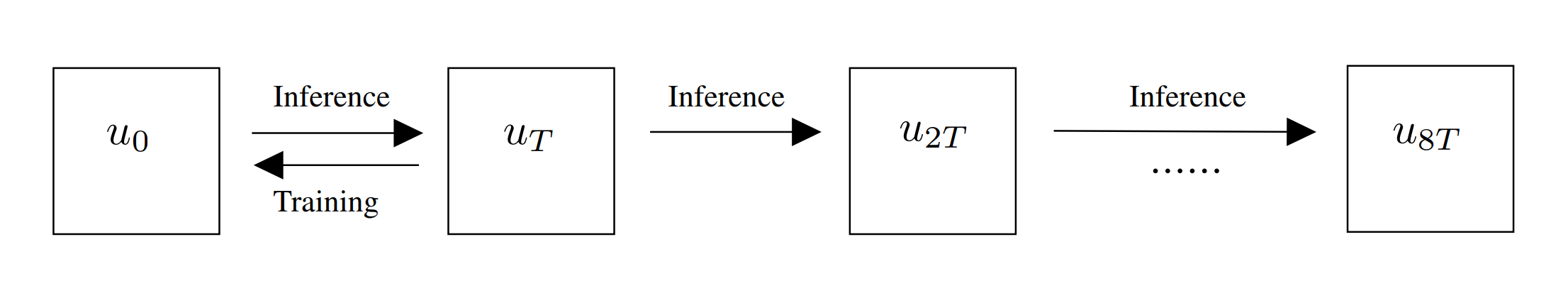}
    \caption{Inference pipeline of $8 T$ evolution, $T=5s$.}
    \label{fig:pip-long-evol-main-text}
\end{figure}

\begin{figure}
	\centering
	\includegraphics[width=1\linewidth]{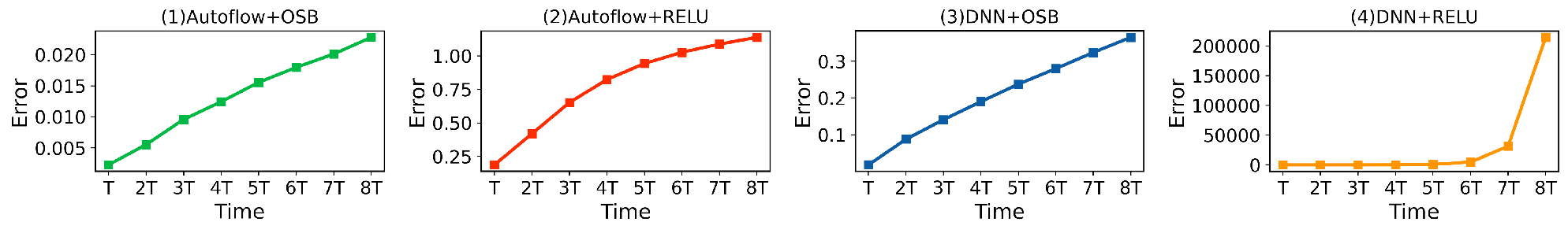}
	\caption{Relative errors at 8 time steps after the training time, with 4 different model settings.}
	\label{fig:4comp-long-error}
\end{figure}

\begin{figure}
	\centering
	\includegraphics[width=1\linewidth]{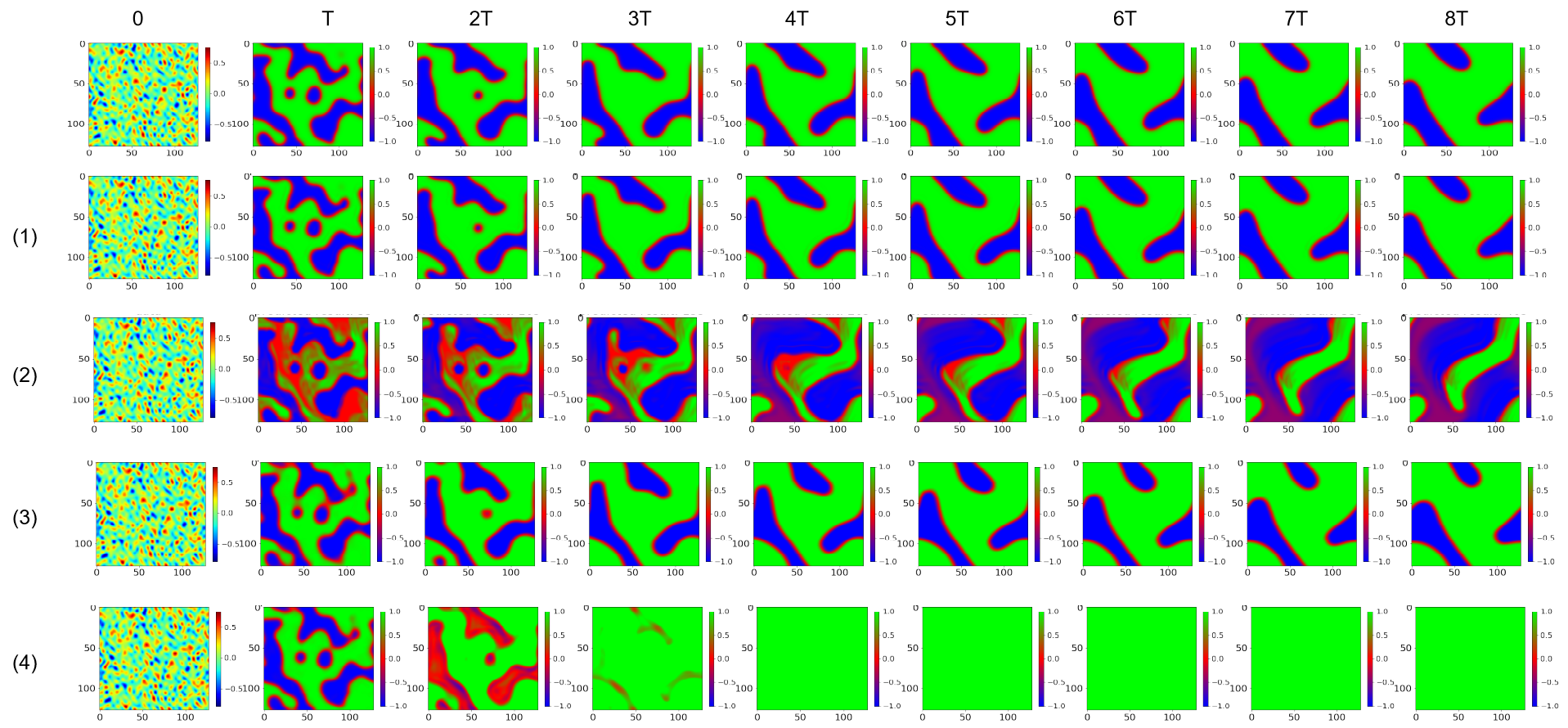}
	\caption{Visualization results of long-time evolution (from 0 to 8T)  with 4 model settings, where T is the training time. The first row is the ground truth generated by a numerical method  (see Appendix \ref{app:numerical_ac}) from states 0 (the initial one) to 8T. The other rows show long-time prediction by (1) DOSnet (contains Autoflow and OSB). (2) Autoflow + ReLU. (3) DNN+OSB. (4) DNN+ReLU.}
	\label{fig:vis-long-generalization-main-text}
\end{figure}

We also investigate the time trajectory of the evolving system using our model. Since Autoflow naturally guarantees that the intermediate states are interpretable, we would like to examine if these intermediate outputs present true states of the Allen-Cahn evolution, and if they are, what are the time steps between them? Thus, we match these intermediate outputs $O_n, n = 0,1,\cdots, N$, where N is the number of blocks , within a large amount of numerical results $y\left(t_n\right), 0 \leq t_n \leq T,$ by minimizing the errors between them. That is, the predicted time $t_n$ for each intermediate output is $t^\star_n = \argmin_{t_j} \Vert O_n - y\left(t_j\right)\Vert_2^2, j = 1,\cdots,K$, where $K$ is the number of numerical results. The matched results for models with 2-7 blocks obtained by 5 trials are presented in Fig.\ref{fig:time-matching}. We find that, for the models with few blocks ( 2-5 in Fig.\ref{fig:time-matching} (a-d)), the time steps between their intermediate outputs are nearly constant and divide the total evolution time equally. While with number of blocks increasing ( 6 and 7 in Fig.\ref{fig:time-matching} (e-f)), the total evolution time cannot be divided into shorter time steps. We believe that this behavior different from numeral solutions may be due to the difficulty of training in DNN-based model with deeper layers. The visualization of intermediate outputs of 5-block DOSnet are shown in Fig.\ref{fig:intermediate_output}. Visualization of intermediate outputs of models with other number of blocks are shown in the supplementary materials.

\begin{figure}
	\centering
	\includegraphics[width=0.8\linewidth]{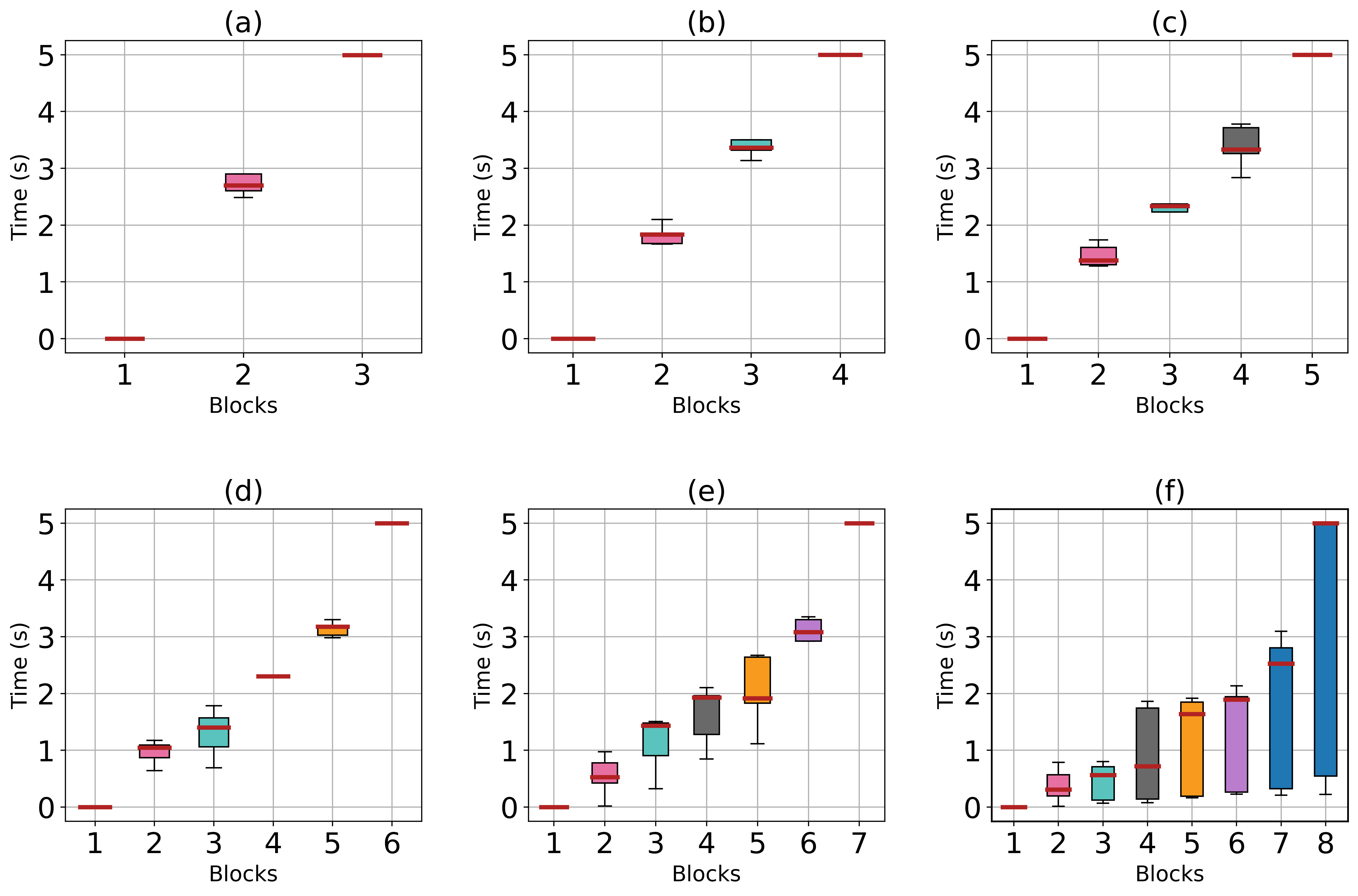}
	\caption{Boxplot results of time-step matching using 2-7 blocks models by 5 trials. The bars show the interquartile range and the red lines denote the medians of 5 trial results.}
	\label{fig:time-matching}
\end{figure}

\begin{figure}
    	\includegraphics[width=\linewidth]{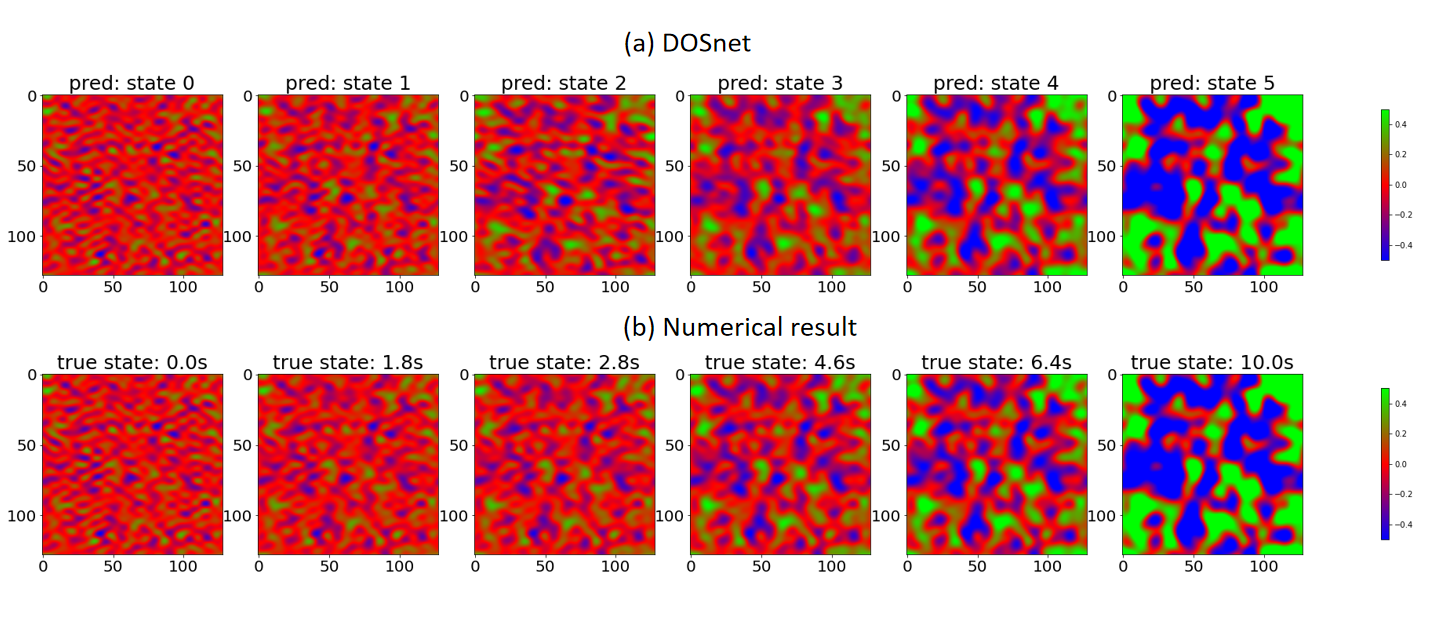}
    	\caption{(a) Visualization of intermediate outputs of 5-block DOSnet. (b) The numerical results at the matched time.}
    	\label{fig:intermediate_output}
\end{figure} 
We also examine the computational complexity, we compared the trainable parameters between 5-block DOSnet and 5-block baseline DNN used in previous experments. Under the same kernel size of each convolution, our DOSnet has 19360 trainable parameters, which is much fewer than 96896 parameters of baseline DNN. This shows that the Autoflow structure in DOSnet is able to significantly reduce the computational complexity. Note that using a traditional numerical scheme, in order to reduce the error, the time step has to be very small, leading to a large number of steps. Once it is trained, our network DOSnet is able to describe the dynamics of Allen-Cahn equation with much lower computational complexity.

Finally, we discuss the effect of number of blocks in DOSnet. The default number of the blocks (OSBs) in our model is set to be 5. This setting is empirically the best. Fig.\ref{fig:all1layers} shows the logarithmic relative errors by 5 trials for solving the  Allen-Cahn equation for different number of OSBs. Each OSB has the same structure: two convolutions and two activation functions, and each convolution has 16 channels with kernel size of 11. We observed that the 5-block model achieved smallest average relative error with acceptable variance.
\begin{figure}[H]
	\centering
	\includegraphics[width=0.5\linewidth]{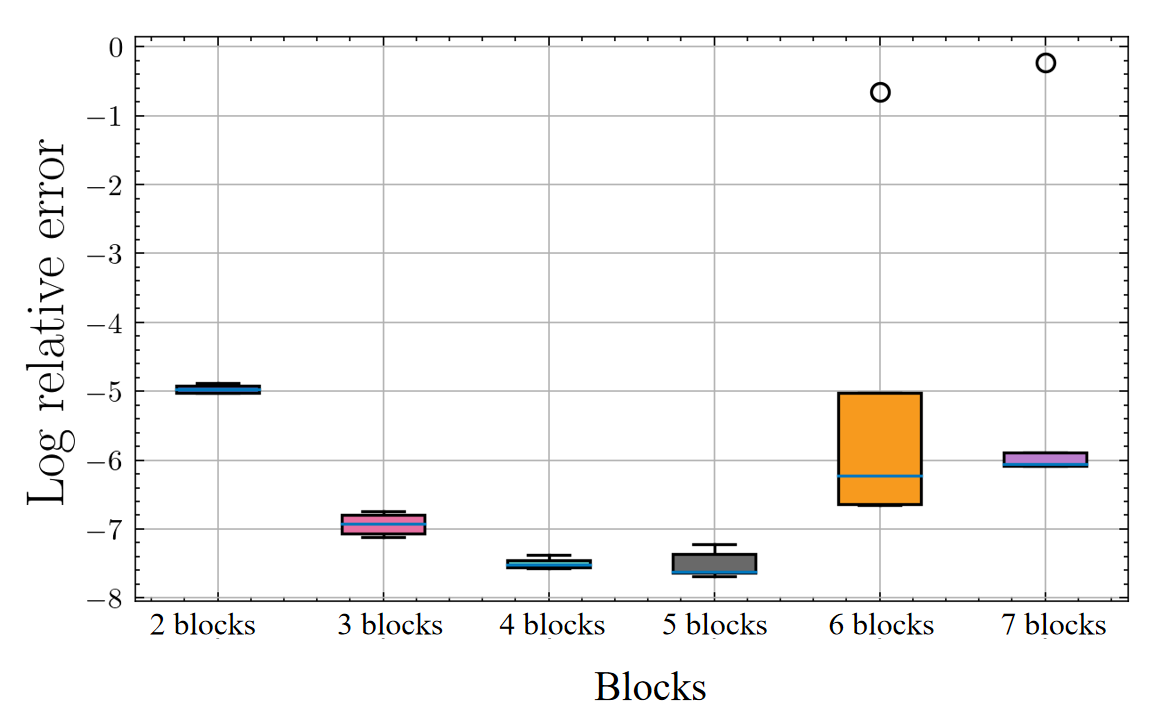}
	\caption{ Logarithmic relative errors for models with 2-7 blocks by 5 trails for solving the Allen-Cahn equation.}
	\label{fig:all1layers}
\end{figure}

\subsection{Application: Nonlinear Sch\"{o}rdinger equation}\label{sec:nlse}
In this section, we aim to address the nonlinear compensation problem in optical communication by applying DOSnet to solve the NLSE inversely. 
Since being invented by Sir Charles K. Kao in the 1960s, the technology of fiber-optic communication has been rapidly developed and has become one of the foundations in the era of information \cite{kao_dielectric-fibre_1966}. One of important applications in the signal processing for the modern optical fiber transmission systems is nonlinear compensation, in which nonlinear Schr\"odinger equations (NLSE) are solved inversely to recover the original signals at the transmitter from the highly distorted signal at the receiver. The distorted signal is caused by four factors during transmission: noise from inline optical amplifier, attenuation, dispersion, and nonlinear effect from the characteristics of fiber which can be described by NLSE. 
In order to recovery the original information of this signal, nonlinear compensation is required. More on the optical transmission system are given in Appendix \ref{app:nlse}.

Mathematically, the forward propagation of a signal in the fiber can be described by the NLSE \cite{agrawal_fiber-optic_2010},
\begin{align}\label{eq:NLSE}
	\frac{\partial u}{\partial x} = -\frac{\alpha}{2} u - \frac{i\beta}{2}\frac{\partial^{2}u}{\partial t^{2}} + i\gamma \lvert u \rvert^{2}u,
\end{align}
where the propagation parameters $\alpha>0, \beta\in\bbR, \gamma\in\bbR$ are characteristics of the optical fiber and $i=\sqrt{-1}$. Here $u$ is a complex-valued function of $(t,x)\in\bbR\times[0,X]$, which represents the light field in the fiber whose length is $X$. The simulated distorted signal at the receiver can be obtained by solving  Eq.\eqref{eq:NLSE} with a given initial condition $u(t,x=0)=u_{0}(t)$, which is the signal to be transmitted to a given distance $x$.
Notice that in this equation, the optical signal is a function of $t$ and it propagates along $x$, which indicates the distance of propagation.
Conversely, the backward propagation, which recoveries the clean signal at the transmitter from the distorted signal at the receiver, can be simulated by solving the NLSE \eqref{eq:NLSE} inversely, i.e., taking the negative sign of the propagation parameters $\alpha, \beta$, and $\gamma$ in Eq.\eqref{eq:NLSE}. Traditional numerical methods \cite{Ip2008CompensationOD,Weideman1986SplitstepMF} for the simulation of backward propagation is based on the operator splitting of  linear and nonlinear parts as two sub equations:
\begin{align} 
	&\frac{\partial u}{\partial x} = \cL u = \frac{\alpha}{2} u + \frac{i\beta}{2}\frac{\partial^{2}u}{\partial t^{2}}, \label{eq:NLSE_linear_ops}\\
	&\frac{\partial u}{\partial x} = \cN u = -i\gamma \lvert u\rvert^{2}u. \label{eq:NLSE_nonlinear_ops}
\end{align}
Note that the solution of the linear PDE \eqref{eq:NLSE_linear_ops} is
$u(t,x+\xi) = \sqrt{\frac{i}{2\pi\beta\xi}}\exp{\left(\frac{\alpha\xi}{2}+\frac{i t^{2}}{2\beta\xi}\right)} * u(t,x) $, where $*$ denotes the convolution, and $\xi$ is the distance of propagation. It describes the attenuation and dispersion of the signal along the distance of propagation $\xi$. The solution of the nonlinear PDE \eqref{eq:NLSE_nonlinear_ops} is $u(t,x+\xi) = u(t,x)\exp{(-i\gamma\xi\lvert u(t,x)\rvert^{2})}$, which gives the nonlinear change of phase rotation in the signal.

In DOSnet, we use a complex-value convolution with learnable parameters instead of the analytic solution for the linear PDE \eqref{eq:NLSE_linear_ops}. The nonlinear activation function in DOSnet follows the solution of the nonlinear PDE \eqref{eq:NLSE_nonlinear_ops} rather than the common-used activation function in the neural network, such as ReLU, sigmoid.


We examine the performance of our algorithm in nonlinear compensation for optical signal. The performance of the recovery is measured by the bit-error-rate (BER), which is the
ratio of error bits to the total number of transmitted bits at the decision point,
and is a commonly used measurement in optical transmission system.
Smaller BER indicates a better algorithm. The data we used in experiments are shown in section \ref{sec:method_exp_set} and our model setting is described in \ref{sec:model_set}.

In our experiments, we compare the performance of nonlinear compensation with the classical numerical algorithm, SSFM, under the launched power from $-6$ dBm  to $6$ dBm. Larger launched power indicates stronger nonlinear effect during transmission. When the launch power is low, the nonlinear effect in NLSE \eqref{eq:NLSE} is small, the difficulty of compensation mainly comes from the noise of inline amplifier. BER results obtained using DOSnet and comparisons with those of SSFM are shown in Fig.\ref{fig:nlse_BER}. From the results, we can observe that DOSnet achieves much lower BER than SSFM when the launched powers are large (0-6dBm), i.e., the nonlinear effect is strong. This demonstrates that our method is better at  handling the nonlinear  interaction  effect than traditional numerical methods.  Note that when the launched power is low, DOSnet gives slightly higher BER than SSFM. This is because the linear operators in DOSnet is learned from noisy data, while the results of SSFM are obtained by the deterministic equation \eqref{eq:NLSE_linear_ops} without consider the noise. Thus the determined operators of SSFM is more robust to noise.

\begin{figure}[h]
\centering
    	\includegraphics[width=0.8\linewidth]{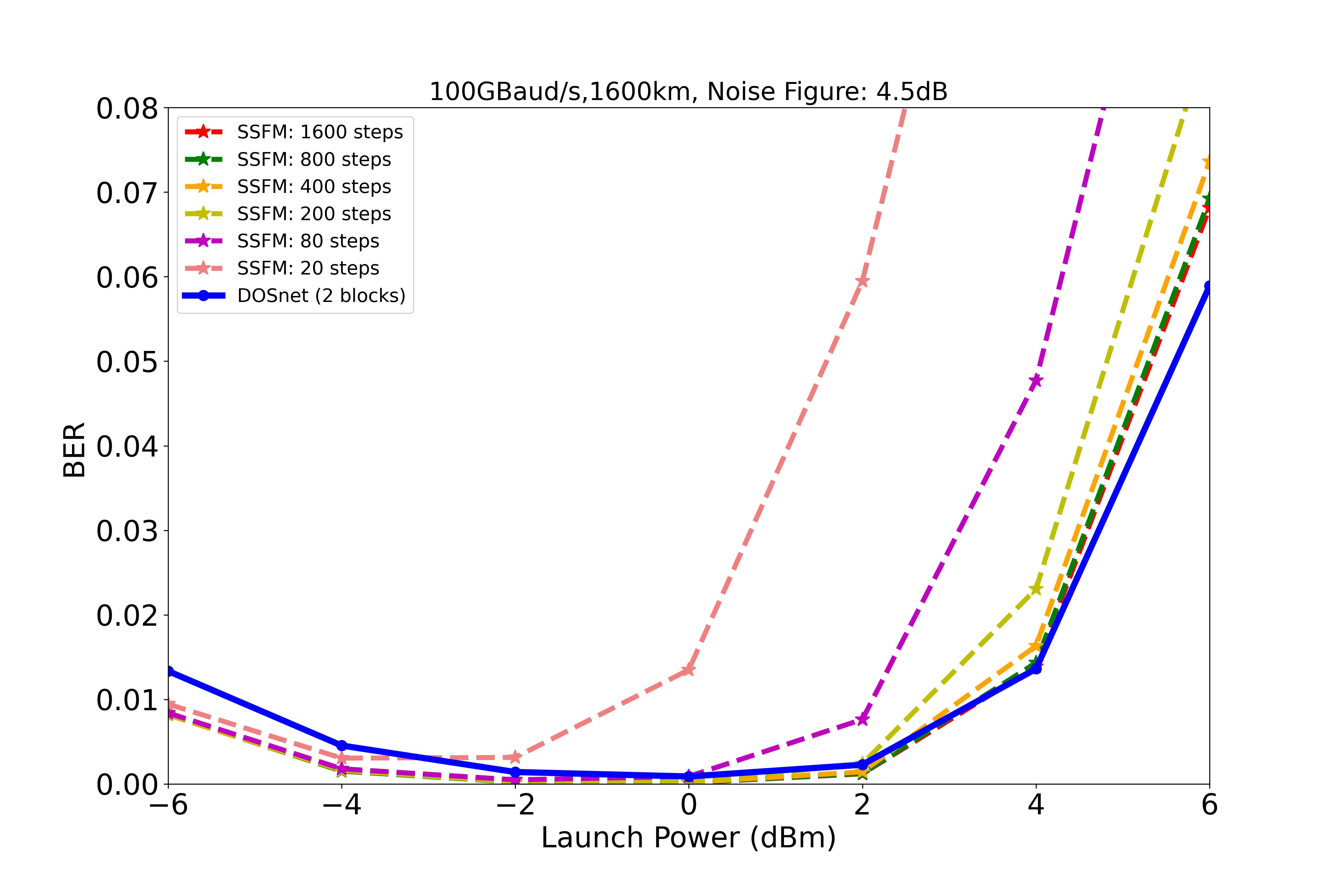}
    	\caption{ BER (Bit error rate) of the results of DOSnet and multi steps SSFM algorithm for launch power from $-6$ dBm to $6$ dBm.}
    	\label{fig:nlse_BER}
\end{figure} 

We also examine the computational complexity in the experiments. We compare the results of 2-block DOSnet (24,008 parameters) with multiple steps SSFM, and the results in Fig.\ref{fig:nlse_BER}. It can be seen from the results that our method achieves the same BER with as the 80 steps SSFM in 0dBm. As the launch power increases, SSFM requires more numerical steps to obtain the same performance as DOSnet. With more numerical steps, the performance of SSFM converges, and when the launch power is below 6dBm, its converged BER is still much higher than that of DOSnet. Therefore, DOSnet shows much better BER and lower computational complexity than SSFM from 0dBm to 6dBm. In Fig.\ref{fig:dosnet_result_vis}, we show some qualitative results of the signals recovered by DOSnet. 

\begin{figure}[h]
\centering
    	\includegraphics[width=1\linewidth]{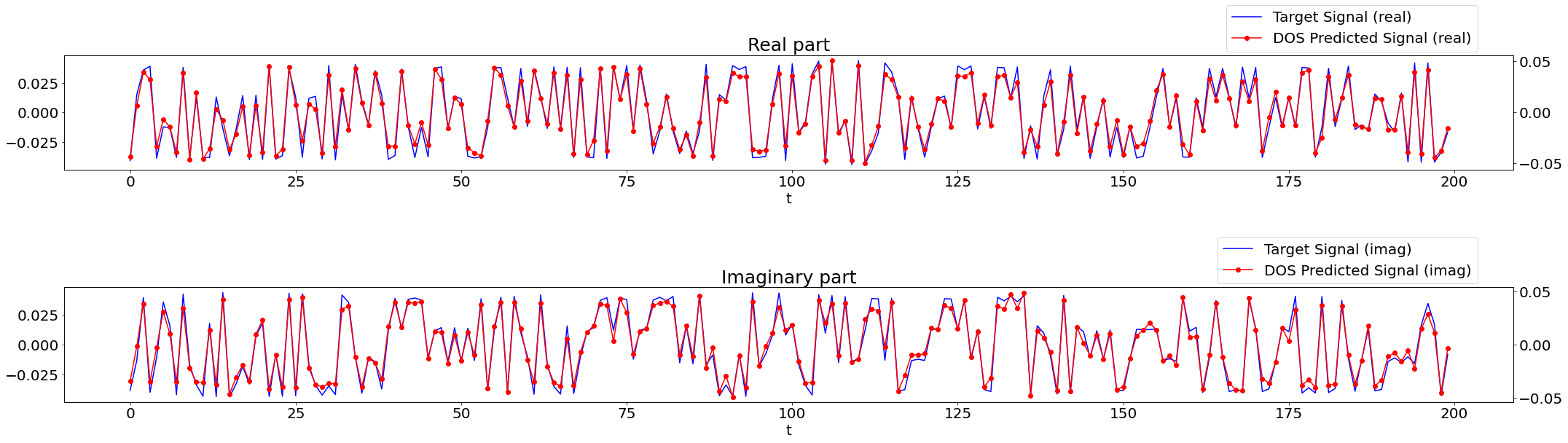}
    	\caption{Signals recovered by DOSnet and the original signals at transmitter (i.e., the ground truth) at $2$ dBm.}
    	\label{fig:dosnet_result_vis}
\end{figure} 

Finally, we compare the performance of the common-used models in deep learning, such as multilayer perceptron (MLP) \cite{hastie2009elements}, VGG16 \cite{simonyan2014very}, ResNet\cite{he_deep_2016}, and bidirectional long short-term memory network (Bi-LSTM) \cite{graves2005bidirectional} with that of DOSnet, and the results are summarized in Table. \ref{table:nlse_nn}. The results show that DOSnet can achieve the best BER with the fewest model parameters. We observe that DOSnet outperforms MLP, a basic neural network, with 62.5 times fewer parameters. Moreover, from the result of VGG16 and ResNet18, the standard DNNs with high-dimensional features, we observe that their huge amounts of parameters do not benefit the performance of solving the inversed NLSE. Also, the performance of Bi-LSTM, one of the classical recurrent neural networks, is worse than that of DOSnet, even with much more parameters. 

\begin{table}[h]
\renewcommand\arraystretch{1.5}
\centering
\scalebox{1}{
    \begin{tabular}{ccc}
        \hline\hline
        Models & Parameters & BER \\
        \midrule
        MLP \cite{hastie2009elements} & 1.5M & 0.0029\\
        VGG16 \cite{simonyan2014very} & 29.9M &0.0746 \\
        ResNet18 \cite{he_deep_2016} & 7M &0.1858\\
        Bi-LSTM \cite{graves2005bidirectional}&48.1M& 0.0086\\
        \textbf{DOSnet (Ours)} & \textbf{0.024M} & \textbf{0.0009}\\
        \bottomrule
\end{tabular}}
\caption{Results of DOSnet comparing with baseline DNNs based on the optical transmission system described in Section \ref{sec:method_exp_set_nlse} with $0$ dBm launch power. We trained those baseline DNNs using the same training data as those of DOSnet. M denotes a million.}
\label{table:nlse_nn}	
\end{table}

\section{Methods}\label{sec:method}
\subsection{Model setting} \label{sec:model_set}
\textbf{Architecture:} For the Allen-Cahn equation, the architecture of Autoflow contains five operator splitting blocks (OSBs) by default. Each OSB consists of two convolutional layers (hereinafter referred to as \textit{conv}) and two activation layers (referred to as \textit{act}), i.e. \textit{conv-act-conv-act} with channel number alternating from $1-16-1$. Each convolutional operator has kernel size of $11$. The baseline DNN for solving the Allen-Cahn equation has layers: \textit{conv-act-[conv-act]*3-ac-conv}, where each convolution layer converts the input into 16 channels with kernel size of 11, expect for the last convolution which maps the input into the output of one channel with kernel size of 11. For the NLSE, we choose a two-block DOSnet, where each block contains one convolution and one activation. Each convolution has 2 channels with kernel size of 3001. 

\noindent\textbf{Optimizer:} 
Adam optimizer is used for training with learning rate 0.0004 in the Allen-Cahn equation and 0.001 in the NLSE. The learning rate decays to its 10\% for every 30 epochs. The batch size is 64 without use batch normalization. The weight initialization in networks are chosen as orthogonal initialization in NLSE and as Kaiming uniform initialization in Allen-Cahn equation.

\noindent\textbf{Network training:} All trainings in this paper are based on the deep learning framework Pytorch \cite{paszke2019pytorch} and all computation are implemented on a machine equipped with an Intel Core i9-10900X CPU and a NVIDIA GeForce RTX 2080 Super GPU.

\subsection{Experimental set-up}\label{sec:method_exp_set}
\subsubsection{Allen-Cahn Equation}
In the Allen-Cahn equation, we generate a dataset of 500 data pairs by Strang splitting \cite{strang1968construction} with time step $2\times 10^{-4}$s. The hyperparameter $\epsilon$ in Eq.\eqref{eqn:allen_eq} is 0.02. Each data pair contains $u_0\left(\bmx\right)$ and $u\left(\bmx,T\right)$, where $T=5s$ for $\bmx\in[-1,1]^2$ and it is discretized into $128\times128$ uniform grids.
\subsubsection{NLSE}\label{sec:method_exp_set_nlse}
The data  we used in NLSE experiment is simulated signals in optical fiber of 1600 km, with transmission rate 100 GBaud/s. A sequence of 16-QAM signal contains 96,000 symbols with sampling rate of 4 samples per symbol (sps) is generated at the transmitter. Each symbol is a segment of the signal with the same length. The signal is transmitted through the optical fiber with 20 spans. The parameters in NLSE are $\alpha=0.063, \beta=-21.68, \gamma=1.66$ for the fiber, which is solved by the split-step Fourier method (SSFM) \cite{Weideman1986SplitstepMF, Hardin1973ApplicationOT} to obtain the training data. Noise with 4.5 dB amplifier noise figure is added during the transmission every 80km (see Appendix \ref{app:nlse} for details). 

Since the goal is to recover the original signal from the distorted signal, the distorted data at the receiver is the input of our network and the undistorted data at the transmitter is the target for DOSnet is to recover. Thus, the data for training DOSnet are two signals with length of 384,000 samples each. In practice, each signal is first downsampled to 2 sps and then cropped into {24,000} segments with 6016 samples with stride of 8 samples. 
Inside each data of 6016 samples, the middle 16 samples are valid samples and the remaining 3000 samples on both side of them are used for padding. After passing the 2-block DOSnet where each convolutional layer with kernel size of 3001, the output of DOSnet is a predicted recovery data with 16 samples (See Fig.\ref{fig:datanlse}). The training and validation set contain 12,000 segments, respectively. 
\begin{figure}[h]
	\centering
	\includegraphics[width=\linewidth]{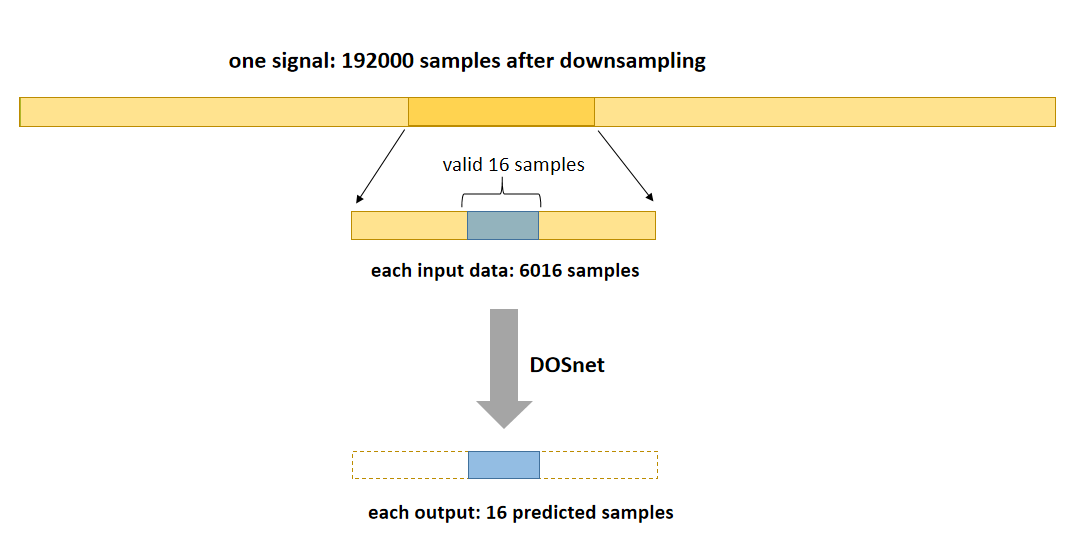}
	\caption{Singal preprocessing in NLSE example. A long signal after downsampling at receiver contains 192000 samples. This signal is cropped into 24000 segments. Each segment has the length of 6016 samples (16 samples are valid, the remaining samples are for padding). Those segments are the input of DOSnet. After passing through DOSnet, only 16 samples are predicted.}
	\label{fig:datanlse}
\end{figure}



\section{Conclusions}\label{sec:disscussion}
This paper proposes the DOSnet, an operator-splitting-based neural network, to solve evolutionary PDEs. The structure of the DOSnet contains two levels: on a coarse level, it is a general dynamical architecture Autoflow (autonomous flow) describing the action of the evolution operators, which consists of multiple learnable OSB (operator splitting blocks); and inside each OSB, there is a series of linear layers and particular nonlinear layers based on the nonlinear operator in the PDE to reflect its properties. A critical 
property of DOSnet is that the input and output of each block have the same dimension to mimic the evolution of the PDE. This feature distinguishes DOSnet from the standard DNNs: each block of DOSnet models the transition between the states in the physical evolution systems rather than mapping the input into some higher dimensional features. Moreover, the design of identical input and output dimensions in DOSnet vastly reduces the parameters of the neural network. It makes DOSnet a lightweight neural network that meets the need for efficient simulation in the  engineered applications, such as the nonlinear compensation in optical fiber transmission systerms. 

We validate DOSnet on several types of operator-decomposable differential equations. For solving the Allen-Cahn equation, experimental results show that DOSnet achieves better accuracy in predicting the solutions than the baseline DNN. DOSnet also shows several unique properties that the baseline DNN cannot represent. First, DOSnet can `stabilize' the error increasing in the long-term evolution. The curve of the generalization errors of the baseline DNN in long-term evolution grows exponentially, while that of the DOSnet represents linearity with a significant drop in errors. Moreover, we observe that intermediate outputs of DOSnet can match the time trajectory of the evolving system, and their time steps are nearly constant. 

We also apply DOSnet to solve the inversed NLSE, which has important applications in the nonlinear compensation of optical communication. In this application, DOSnet has demonstrated its significant advantages in mitigating nonlinear distortion with low computation cost. Compared to the traditional numerical schemes, the SSFM with multiple iteration steps, a 2-block DOSnet can achieve better BER in high launched powers.
Compared to the standard DNNs with millions of parameters, our 2-block DOSnet has a significantly better BER than theirs with at least $62.5$ times fewer parameters. 

The proposed DOSnet is developed based on operator splitting of the PDEs. The results and analysis in this paper also demonstrate that it is much more efficient to utilize a neural network with the help of the physical properties of the underlying PDEs rather than extracting the general high-level features with millions of parameters.

\section*{Acknowledgments}
The work of Yang Xiang was supported in part by HKUST IEG19SC04 and the Project of Hetao Shenzhen-HKUST Innovation Cooperation Zone HZQB-KCZYB-2020083.

\appendix
\setcounter{figure}{0} 
\setcounter{table}{0}
\section{Error Analysis for Operator Splitting Block (OSB)}

We show error analysis for OSB in details based on the Allen-Cahn equation in main text section \ref{sec:ac}. The Allen-Cahn equation is 
\begin{equation}
		\label{eqn:appendix_allen_eq}
		\begin{split}
			\partial_t u\left(\bmx,t\right) 
			=&\cL u +\cN u \\
			=&\epsilon^2 \nabla^2 u\left(\bmx,t\right) + \left(u\left(\bmx,t\right) - [u\left(\bmx,t\right)]^3\right), \quad  \bmx\in \Omega,\,t>0.
		\end{split}
	\end{equation}
As we have shown in main text section \ref{sec:ac},  the solution of linear and nonlinear parts in Eq.\eqref{eqn:appendix_allen_eq} can be written respectively as,
\begin{equation} \label{eq:appendix_linearac}
    u(\bmx,t+h) = \cW_h u(\bmx,t) = e^{h\cL}u(\bmx,t),
\end{equation}
\begin{equation}\label{eq:appendix_nonlinearac}
    u(\bmx,t+h) = \cS_h u(\bmx,t) = e^{h\cN}u(\bmx,t),
\end{equation}
where the linear operator $\cW_h =  e^{h\cL} $, and the nonlinear operator $\cS_h=e^{h\cN}$, with $h$ being the time step. 

We use the second order expansion of $\cW_h, \cS_h$, and $\cS_h^{-1}$ at $t=0$, for a small h:
\begin{equation} \label{eq:w_h}
    \cW_h u_0 = u_0 + h\partial_{xx} u_0 + \frac{h^2}{2} \partial_{xxxx} u_0 + \mathcal{O}(h^3),
\end{equation}
\begin{equation}\label{eq:s_h}
    \cS_h u_0 = u_0 + h (-u_0^3+u_0)+ \frac{h^2}{2}u_0 (1-4u_0^2+3u_0^4) + \mathcal{O}(h^3),
\end{equation}
\begin{equation}\label{eq:invert_s_h}
    \cS_{h}^{-1} u_0 = u_0 - h (-u_0^3+u_0)+ \frac{h^2}{2}u_0 (1-4u_0^2+3u_0^4) + \mathcal{O}(h^3).
\end{equation}
where $u(x,0)$ is denoted as $u_0$ for simplicity. 

We understand OSB by comparison with the traditional operator splitting algorithm. Suppose that one OSB is used to approximate the evolution from $u_0$ to $u_{T_{1}}$. Recall that in the operator splitting algorithm (Eq.\eqref{eq:gcOSN} in main text), $u_{T_{1}}$ is estimated by a stack of compositions of $N$ linear and nonlinear operators on $u_0$, and when $N$ is large enough, the approximation error is small. While in  OSB, we have $u_{T_1}\approx\psi_{\btheta}u_0 = \psi_{\cN_\alpha}\psi_{\cL_{\btheta}}u_{0}$, where $\btheta$ and $\alpha$ are all the learnable parameters. Thus, we have

\begin{equation} \label{app:error_u_t1}
    u_{T_1} = \cS_{h}\cW_{h}\dots\cW_{h}\cS_{h}\cW_{h}  u_{0} + \cO(Nh^2) \approx  \psi_{\cN_\alpha}\psi_{\cL_{\btheta}}u_{0}.  
\end{equation}
  
To better analyze the approximation error of the learnable layer $\psi_{\cL_{\btheta}}$ in the OSB, we set $\psi_{\cN_\alpha} = \overbrace{\cS_{h}\cdots\cS_{h}}^{N}$. Then for Eq.\eqref{app:error_u_t1} to hold, $ \psi_{\cL_{\btheta}}u_0$ can be written as

\begin{equation} \label{eq:appedix_linear_approx}
\begin{split}
\centering
        \psi_{\cL_{\btheta}}u_0 = &\psi_{\cN_\alpha}^{-1}\overbrace{\cS_{h}\cW_{h}\dots\cW_{h}\cS_{h}\cW_{h}}^{2N} u_{0}\\
        =&\overbrace{\cS_{h}^{-1}\cdots\cS_{h}^{-1}}^{N}\overbrace{\cS_{h}\cW_{h}\dots\cW_{h}\cS_{h}\cW_{h}}^{2N} u_{0} \\
        =& \cW_{Nh}u_0 - \cS_{h}^{-1}[\cS_h,\cW_{Nh}]\cW_h u_0 -  \cS_{h}^{-1}\cS_{h}^{-1}[\cS_h,\cW_{(N-2)h}]\cW_h \cS_{h}\cW_hu_0 \\
        & -  \cdots - \underbrace{\cS_{h}^{-1}\cdots\cS_{h}^{-1}}_{N-1}[\cS_h,\cW_{h}]\underbrace{\cW_h \cdots\cS_{h}\cW_h}_{2N-1} u_0 .
\end{split}
\end{equation}

In Allen-Cahn equation, $\cW_h$ and $\cS_h$ are not commutative, and their Lie bracket can be calculated as
\begin{equation}
   [\cS_h,\cW_h] u_0 = \cS_h\cW_h u_0 - \cW_h\cS_h u_0 =   -6  u_0(\partial_x u_0)^2h^2 + \mathcal{O}(h^3)
\end{equation}
by Eq.\eqref{eq:w_h} and Eq.\eqref{eq:s_h}. Furthermore, 
\begin{equation}
    \left[[\cS_h,\cW_h],\cS_h^{-1}\right] =  \cO(h^3). 
\end{equation}
Therefore, in Allen-Cahn equation,  Eq.\eqref{eq:appedix_linear_approx} gives:

\begin{equation}\label{eq:appendix_linear_approx_final}
\begin{split}
\centering
        \psi_{\cL_{\btheta}}u_0 
         = \cW_{Nh}u_0 - [\cS_h,\cW_h]\left(\cW_{(N-1)h} + 2\cW_{(N-2)h} +\cdots (N-1)\cW_h\right)u_0 +\cO(h^3)
\end{split}
\end{equation}
Note that on the right-hand side (RHS) of Eq.\eqref{eq:appendix_linear_approx_final}, the first term is linear, while the second term is nonlinear since the term $[\cS_h,\cW_h] =   -6  u_0(\partial_x u_0)^2h^2 + cO(h^3)$ is nonlinear. In practice, $\psi_{\cL_{\btheta}}$ in DOSnet is expected to be a fully linear layer. Therefore, besides approximating the first linear term $\cW_{Nh}u_0$ on the RHS of Eq.\eqref{eq:appendix_linear_approx_final}, the linear layer in the DOSnet provides a linear approximation for the second nonlinear term on the RHS of Eq.\eqref{eq:appendix_linear_approx_final}.


\section{Stability of Long Time Evolution  in Schr\"{o}dinger Equation} \label{appendix:schodinger}
 In this example, we validate the structure of Autoflow from another point of view, the stability of linear Autoflow in a long time evolution. We implement our experiment on the Schr\"{o}dinger equation as,
 \begin{align}
    i u_t = -u_{xx}+V\left(x,t\right)u,\quad x \in [-\pi,\pi],\, t>0,
\end{align}
with potential $V \equiv 0$, which describes a pure phase rotation from 0s to 0.03s. The initial and boundary conditions are the same as those in Eq.~\eqref{advectioneq}. 

The experimental result in Fig.\ref{fig:schroinger_toy} shows that the Autoflow with unitary matrices can  ``stabilize"  the inference errors in long time evolution. It is noticed that the design of intermediate dimension in Autoflow promises the use of unitary matrices  which respect the property of Schr\"{o}dinger equation. In contrast, it is difficult to utilize unitary matrices in a standard Convolutional Neural Network (CNN) with higher dimensional feature space. This shows that our model is flexible and scalable to respect the property of original PDEs.

Specifically, in order to guarantee the long-range stability \cite{jing2017tunable,kiani2022projunn}, we employ unitary matrices \cite{jing2017tunable} in each layer of linear Autoflow to enforce the probability conservation property of the Schrodinger equation. 
We test the performance of long-time evolution on the two model settings: linear Autoflow with unitary matrices and regular linear Autoflow without unitary matrices. First, the model is trained to approximate the solution at time $T=0.03$s given input solution at 0s.  Next, we apply the trained model multiple times and inference the solutions on test data from T to 4T. The test results under the two model settings are shown in Fig.\ref{fig:schroinger_toy}. The results are examined by the relative error, $\frac{1}{N}\sum_{i=0}^{N}\left(\frac{\hat{u}(x_i,t)-u(x_i,t)}{u(x_i,t)}\right)^2$, where $\hat{u}$ is the prediction of the network and $u$ is the target. The results show that the Autoflow without restriction shows ``exponential" errors after being applied multiple times, while the Autoflow with unitary matrices can  ``stabilize"  the inference errors in a long time evolution. 
\begin{figure} 
    \centering
    \includegraphics[width=0.6\textwidth]{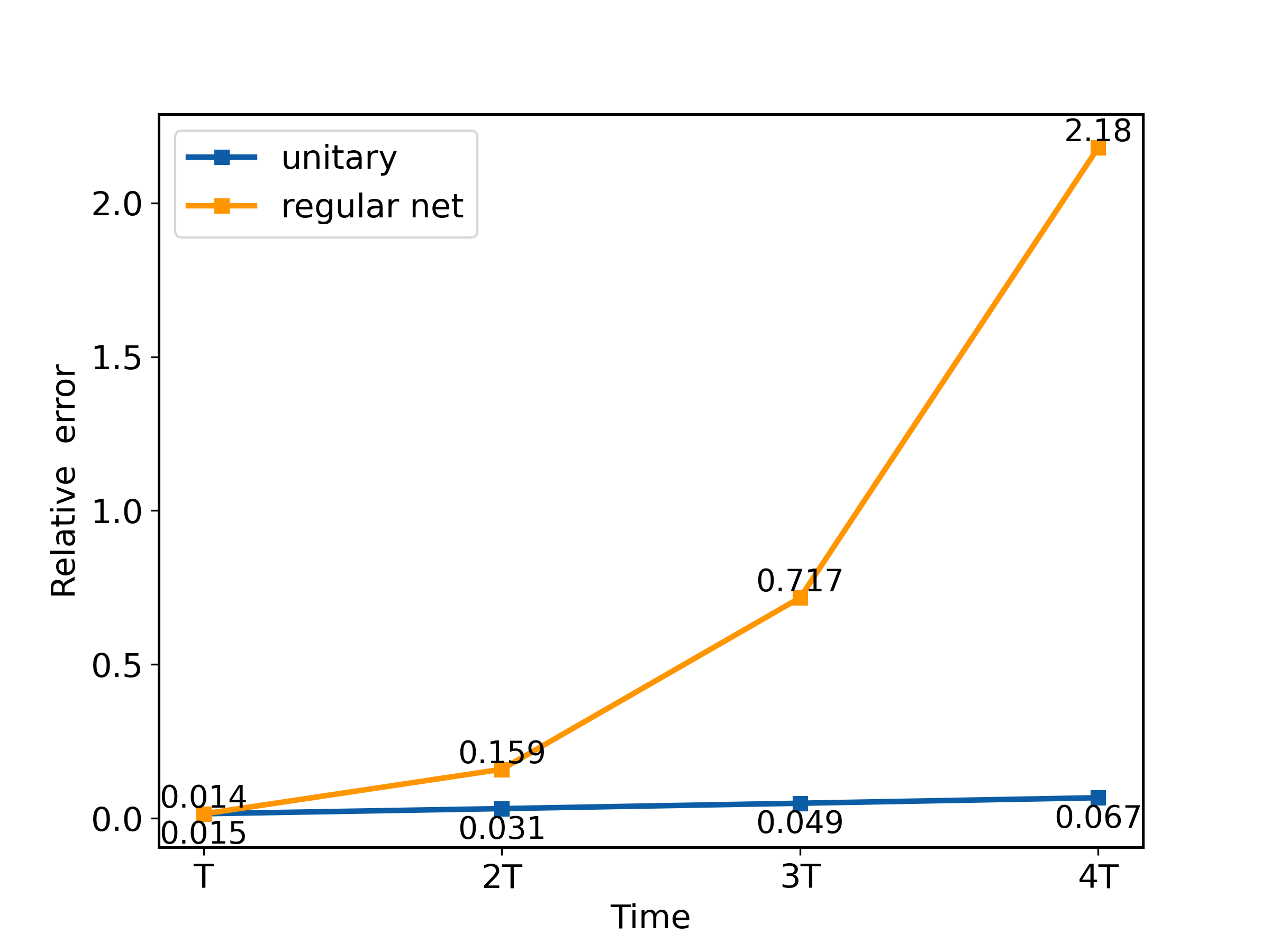}
    \caption{The relative errors of linear Autoflow with unitary matrices (blue line) and regular linear Autoflow (orange line) for long-time inference.}
    \label{fig:schroinger_toy}
\end{figure}

\section{Analysis of Behaviors of the Weights in DOSnet} \label{appendix:proof}
We analyze the converged weights in the linear DOSnet when it is applied to solve a linear PDE. 

In supervised learning, the dataset contains $M$ input-target pair $\{u_0^{(i)},u_T^{(i)}\},\,i=1,\cdots,M$.
Consider a two-layer linear neural network\cite{saxe2013exact} to learn the mappling: $f: u_0\in \mathbb{R}^{N_1} \rightarrow u_T \in \mathbb{R}^{N_3}$. The output  $o = W^{32}h =W^{32}W^{21}u_0 \in \mathbb{R}^{N_3}$ is trained to approximate $u_T$, where the hidden state $h = W^{21}u_0 \in \mathbb{R}^{N_2}$. In the linear Autoflow, $N_1 = N_2 = N_3$. The loss function is defined as 
\begin{equation} \label{eq:optimize_loss}
    L = \sum_{i=1}^{M}\lVert u_T^{(i)} - W^{32}W^{21}u_0^{(i)}\rVert^2+\beta \left(\lVert W^{32}\rVert^2 + \lVert W^{21}\rVert^2\right),
\end{equation}

we explore the behavior of the weights $W^{32}$ and $W^{21}$ by analyzing the gradient descent dynamics that optimizes Eq.\eqref{eq:optimize_loss}:
\begin{equation} \label{eq:limitw21}
    \frac{1}{\lambda}\frac{dW^{21}}{dt} = (W^{32})^{T}\left(\Sigma^{31}-W^{32}W^{21}\Sigma^{11}\right)-\beta W^{21},
\end{equation}
\begin{equation} \label{eq:limitw32}
    \frac{1}{\lambda}\frac{dW^{32}}{dt} = \left(\Sigma^{31}-W^{32}W^{21}\Sigma^{11}\right)(W^{21})^{T}-\beta W^{32},
\end{equation}
where  $t$ measures time in the unit of iterations, $\lambda$ is the step size, $\Sigma^{11} = \frac{1}{M}\sum_{i = 1}^{M}u_0^{(i)}u_0^{(i)T}$  is the $N_1\times N_1$ input covariance matrix, $\Sigma^{31} = \frac{1}{M}\sum_{i = 1}^{M}u_T^{(i)}u_0^{(i)T}$ is the $N_3\times N_1$ input-output cross-covariance matrix. 

We obtain the following proposition for the behaviors of the weights in the linear Autoflow. This proposition holds when the linear Autoflow is applied to a general problem including solving a linear PDE. Note that $N_1=N_2=N_3$ in these propositions.

\begin{proposition}\label{thm:weight_relation} 
Assume the initial weights follow orthogonal initialization from \cite{saxe2013exact}, then the converged weight matrices $W^{32}$ and $W^{21}$ has the following  relation, 
\begin{equation} \label{eq:relation_w21_w32}
    W^{21} = \text{sgn}(S^{31}) (W^{32})^{T} U^{33}(V^{11})^{T},
\end{equation}
\begin{equation} \label{eq:fixed_point_w21}
    W^{21} = \pm R\sqrt{\text{sgn}(S^{31})S^{31}-\beta} (V^{11})^{T},
\end{equation}
\begin{equation}\label{eq:fixed_point_w32}
    W^{32} = \pm\text{sgn}(S^{31}) U^{33}\sqrt{\text{sgn}(S^{31})S^{31}-\beta} R^T, 
\end{equation}
where $S^{31}$ is the diagonal matrix, $V^{11}$ and $U^{33}$ are orthogonal matrices from the singular value decomposition (SVD) of $\Sigma^{31}$, and $R$ is an arbitrary orthogonal matrix.
\end{proposition}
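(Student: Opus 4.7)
The plan is to characterize the fixed points of the gradient flow \eqref{eq:limitw21}--\eqref{eq:limitw32}, then exploit the orthogonal initialization to show that the converged weights lie in the specific subfamily described by \eqref{eq:relation_w21_w32}--\eqref{eq:fixed_point_w32}.

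First, following the standard setup of Saxe--McClelland--Ganguli, I would assume the inputs are whitened so $\Sigma^{11} = I$. Setting $\dot W^{21}=\dot W^{32}=0$ in \eqref{eq:limitw21}--\eqref{eq:limitw32} gives the stationarity pair
\begin{equation*}
(W^{32})^T(\Sigma^{31}-W^{32}W^{21}) = \beta W^{21}, \qquad (\Sigma^{31}-W^{32}W^{21})(W^{21})^T = \beta W^{32}.
\end{equation*}
Inserting the SVD $\Sigma^{31}=U^{33}S^{31}(V^{11})^T$ and passing to rotated variables $\bar W^{21}:=W^{21}V^{11}$, $\bar W^{32}:=(U^{33})^TW^{32}$, the residual factors as $\Sigma^{31}-W^{32}W^{21}=U^{33}(S^{31}-\bar W^{32}\bar W^{21})(V^{11})^T$, and the two conditions reduce to $(\bar W^{32})^T(S^{31}-\bar W^{32}\bar W^{21})=\beta\bar W^{21}$ and $(S^{31}-\bar W^{32}\bar W^{21})(\bar W^{21})^T=\beta\bar W^{32}$.

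Next, I would invoke the decoupling property of the Saxe orthogonal initialization: starting from orthogonal $W^{21}(0)$ and $W^{32}(0)$, the imbalance $(\bar W^{32})^T\bar W^{32}-\bar W^{21}(\bar W^{21})^T$ is conserved along the flow and vanishes at $t=0$, so the converged pair admits the representation $\bar W^{21}=R^T A$, $\bar W^{32}=B R$ with a single time-independent orthogonal $R$ and diagonal $A,B$. Substituting into the mode-wise stationarity equations yields the scalar system $b_i(s_i-a_ib_i)=\beta a_i$ and $a_i(s_i-a_ib_i)=\beta b_i$ for each singular index $i$. Multiplying the first by $a_i$ and the second by $b_i$ and subtracting forces $a_i^2=b_i^2$, hence $b_i=\pm a_i$; reinserting and resolving the sign consistently with $s_i$ gives $a_i^2=\mathrm{sgn}(s_i)\,s_i-\beta$ whenever $|s_i|>\beta$ (the remaining modes are driven to zero by the regularizer). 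Assembling the scalars into the diagonal matrices $A$ and $B$ and undoing the rotations $U^{33},V^{11}$ produces exactly \eqref{eq:fixed_point_w21}--\eqref{eq:fixed_point_w32}; eliminating the rotational gauge $R$ between them yields \eqref{eq:relation_w21_w32}.

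The main obstacle will be the careful treatment of the free orthogonal matrix $R$ together with the signs. The regularizer $\|W^{32}\|_F^2+\|W^{21}\|_F^2$ is invariant under $(W^{21},W^{32})\mapsto(QW^{21},W^{32}Q^T)$ for any orthogonal $Q$, which is precisely the gauge freedom captured by $R$; showing that orthogonal initialization picks a single representative in each orbit (rather than a continuous family drifting during training) is what the Saxe-type conservation law above is used for, and that step deserves the most care. A secondary subtlety is verifying that the branch $a_i^2=\mathrm{sgn}(s_i)s_i-\beta$ is dynamically stable while $a_i=0$ is a saddle for $|s_i|>\beta$, so that gradient descent actually converges to the claimed form rather than collapsing to the trivial solution.
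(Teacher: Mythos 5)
Your overall route mirrors the paper's Appendix C proof: rotate by the SVD of $\Sigma^{31}$ to get $\bar W^{21},\bar W^{32}$, reduce to the per-mode scalar system $b(s_\alpha-ab)=\beta a$, $a(s_\alpha-ab)=\beta b$, conclude $a=\pm b$ with $a^2=\mathrm{sgn}(s_\alpha)s_\alpha-\beta$ on the modes with $\lvert s_\alpha\rvert>\beta$ (the others collapsing to zero), check linear stability of these fixed points, and reassemble into \eqref{eq:relation_w21_w32}--\eqref{eq:fixed_point_w32}. That part coincides with the paper.

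The genuine gap is in the step you yourself flag as delicate: you derive the representation $\bar W^{21}=R^TA$, $\bar W^{32}=BR$ (diagonal $A,B$, a single time-independent orthogonal $R$) from conservation of the imbalance $(\bar W^{32})^T\bar W^{32}-\bar W^{21}(\bar W^{21})^T$ under orthogonal initialization. Balancedness does not imply this structure: take $\bar W^{21}(0)=Q_1$, $\bar W^{32}(0)=Q_2$ with $Q_1,Q_2$ independent orthogonal matrices; the imbalance vanishes, yet $\bar W^{32}\bar W^{21}=Q_2Q_1$ is generically not diagonal in the singular basis of $\Sigma^{31}$, so no fixed $R$ of the claimed form exists and the frame that would play the role of $R$ rotates during training. (A secondary imprecision: with $\beta>0$ the imbalance is not conserved; it satisfies $\frac{1}{\lambda}\frac{d}{dt}D=-2\beta D$, so it merely stays zero if it starts at zero --- which is what you need, but ``conserved'' is not the right mechanism.) The paper closes this step by a stronger initialization assumption taken from Saxe et al.: the columns $\bm{a}^\alpha$ of $\bar W^{21}$ and rows $\bm{b}^\alpha$ of $\bar W^{32}$ are initially parallel to a common orthonormal frame $\{\bm{\gamma}^\alpha\}$, which makes the cross-interaction terms in Eqs.\eqref{eq:limit_column_w21}--\eqref{eq:limit_column_w32} vanish for all time, decouples the flow into exactly your scalar systems, and identifies $R$ as the matrix whose columns are the $\bm{\gamma}^\alpha$. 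If you prefer to avoid that assumption for the fixed-point characterization, you could instead derive the alignment from stationarity itself: multiplying the two stationarity equations appropriately gives $\bar W^{21}(\bar W^{21})^T=(\bar W^{32})^T\bar W^{32}$ at any critical point with $\beta>0$, and writing $\bar W^{21}=U_Q\Sigma V_Q^T$, $\bar W^{32}=U_P\Sigma U_Q^T$ the stationarity equations force $U_P^TS^{31}V_Q$ to be diagonal on the active modes --- but as written your proposal assumes the shared-$R$ diagonal form before substituting into the stationarity equations, so that key step is not justified.
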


\begin{proof}
Consider the dynamics of weights given by Eqs.\eqref{eq:limitw21} and \eqref{eq:limitw32}. 
Following the assumption in \cite{saxe2013exact}, we assume $\Sigma^{11}=I$, and  consider the SVD decomposition of $\Sigma^{31}$:
\begin{equation} \label{eq:svd}
    \Sigma^{31} = U^{33}S^{31}(V^{11})^{T} = \sum_{\alpha=1}^{N_1} s_\alpha u_\alpha v_\alpha^T,
\end{equation}
where $S^{31} $ is a $N_3\times N_1$ diagonal matrix whose diagonal elements are singular values $s_\alpha$, $\alpha = 1,\cdots,N_1$, $s_1\geq s_2 \geq \cdots \geq s_{N_1}$, and $V^{11}$ and $U^{33}$ are orthogonal matrices.

By changes of variable $W^{21} = \Bar{W}^{21}(V^{11})^{T}, W^{32} = U^{33}\Bar{W}^{32}$, Eqs.\eqref{eq:limitw21} and \eqref{eq:limitw32} can be rewritten as
\begin{equation} \label{eq:limit_change_w21}
    \Gamma\frac{d\Bar{W}^{21}}{dt} = (\Bar{W}^{32})^{T}\left(S^{31}-\Bar{W}^{32}\Bar{W}^{21}\right)-\beta \Bar{W}^{21},
\end{equation}
\begin{equation} \label{eq:limit_change_w32}
    \Gamma\frac{d\Bar{W}^{32}}{dt} = \left(S^{31}-\Bar{W}^{32}\Bar{W}^{21}\right)\Bar{W}^{21T}-\beta \Bar{W}^{32},
\end{equation}
where $\Gamma = \frac{1}{\lambda}$.

Denote $\bm{a}^{\alpha}$ as the $\alpha\text{-}th$ column of $\Bar{W}^{21}$ and $\bm{b}^{\alpha T}$ the $\alpha\text{-}th$ row of $\Bar{W}^{32}$.  We rewrite Eqs.\eqref{eq:limit_change_w21} and \eqref{eq:limit_change_w32} by the column vectors of the two weight matrices,
\begin{equation} \label{eq:limit_column_w21}
    \Gamma\frac{d \bm{a}^{\alpha}}{dt} = \left(s_\alpha-\bm{a}^{\alpha}\cdot\bm{b}^{\alpha}\right)\bm{b}^{\alpha} - \sum_{\gamma \neq \alpha}\bm{b}^{\gamma}\left(\bm{a}^{\alpha}\cdot\bm{b}^{\gamma}\right) - \beta \bm{a}^{\alpha},
\end{equation}
\begin{equation} \label{eq:limit_column_w32}
    \Gamma\frac{d \bm{b}^{\alpha}}{dt} = \left(s_\alpha-\bm{a}^{\alpha}\cdot\bm{b}^{\alpha}\right)\bm{a}^{\alpha} - \sum_{\gamma \neq \alpha}\bm{a}^{\gamma}\left(\bm{b}^{\alpha}\cdot\bm{a}^{\gamma}\right) - \beta \bm{b}^{\alpha}.
\end{equation}
The energy function can be written accordingly as,
\begin{equation}\label{eq:energy}
    E = \frac{1}{2\Gamma}\sum_\alpha \left(s_\alpha -\bm{a}^{\alpha}\cdot\bm{b}^{\alpha}\right)^2+\frac{1}{2\Gamma}\sum_{\alpha \neq \gamma} \left(\bm{a}^{\alpha}\cdot\bm{b}^{\gamma}\right)^2 + \frac{\beta}{2\Gamma} \sum_\alpha \left(\left(\bm{a}^{\alpha}\right)^2+\left(\bm{b}^{\alpha}\right)^2\right).
\end{equation}

In order to decouple the interaction terms in Eqs.\eqref{eq:limit_column_w32} and \eqref{eq:limit_column_w21}, following the assumption on initial conditions in \cite{saxe2013exact}, we assume that the initial value of $\bm{a}^\alpha$ and $\bm{b}^\alpha$ are all parallel to $\bm{\gamma}^\alpha$, i.e., $\bm{a}^\alpha \mathop{//} \bm{b}^\alpha \mathop{//}\bm{\gamma}^\alpha $, where $\left\{ \bm{\gamma}^\alpha \right\}$ is a fixed collection of $N_2$-vectors that form an orthonormal basis. Under this assumption, $\bm{a}^\alpha$ and $\bm{b}^\alpha$ are in the same direction and only differ in their scalar magnitudes, and are orthogonal to each other.
Consider the magnitude of $\bm{a}^\alpha$ and $\bm{b}^\alpha$ on $\bm{\gamma}^\alpha$, $a^{\left(\alpha\right)} = \bm{a}^\alpha\cdot\bm{\gamma}^\alpha$, $b^{\left(\alpha\right)} = \bm{b}^\alpha\cdot\bm{\gamma}^\alpha$. The dynamics of the scalar magnitudes are,

\begin{equation} \label{eq:simlfy_adynamics}
    \Gamma \frac{d a^{\left(\alpha\right)}}{dt} = b^{\left(\alpha\right)}\left(s_\alpha-a^{\left(\alpha\right)}b^{\left(\alpha\right)}\right)-\beta a^{\left(\alpha\right)},
\end{equation}

\begin{equation}\label{eq:simlfy_bdynamics}
        \Gamma \frac{d b^{\left(\alpha\right)}}{dt} = a^{\left(\alpha\right)}\left(s_\alpha-a^{\left(\alpha\right)}b^{\left(\alpha\right)}\right)-\beta b^{\left(\alpha\right)}.
\end{equation}
The energy function is,
\begin{equation} \label{eq:energy-orthogonal}
    E\left(a^{\left(\alpha\right)},b^{\left(\alpha\right)}\right) = \frac{1}{2\Gamma}\left(s_\alpha-a^{\left(\alpha\right)}b^{\left(\alpha\right)}\right)^2+\frac{\beta}{2\Gamma}\left(a^{\left(\alpha\right)2}+b^{\left(\alpha\right)2}\right)
\end{equation}

Thus the fixed points should satisfy 
\begin{equation}
\left\{
\begin{aligned}
& a^{\left(\alpha\right)}b^{\left(\alpha\right)}\pm\beta = s_\alpha, \\
& a^{\left(\alpha\right)} = \pm b^{\left(\alpha\right)}.
\end{aligned}
\right.
\end{equation}
Thus the fixed points have the following three cases: 
\begin{itemize}
    \item Case 1: $(a^{(\alpha)},b^{(\alpha)})= \left(0,0\right)$
    \item  Case 2: when $a^{\left(\alpha\right)}=-b^{\left(\alpha\right)}, a^{\left(\alpha\right)}b^{\left(\alpha\right)}-\beta = s_\alpha$, two fixed points are  $\left(-\sqrt{-s_\alpha-\beta},\sqrt{-s_\alpha-\beta}\right)$, $\left(\sqrt{-s_\alpha-\beta},-\sqrt{-s_\alpha-\beta}\right)$.
    \item  Case 3: when $a^{\left(\alpha\right)}=b^{\left(\alpha\right)}, a^{\left(\alpha\right)}b^{\left(\alpha\right)}+\beta = s_\alpha$, two fixed points are $\left(-\sqrt{s_\alpha-\beta},-\sqrt{s_\alpha-\beta}\right)$,
    $\left(\sqrt{s_\alpha-\beta},\sqrt{s_\alpha-\beta}\right)$.
\end{itemize}

To examine the convergence of the dynamics of Eqs.\eqref{eq:simlfy_adynamics} and \eqref{eq:simlfy_bdynamics}, we examine the stability of the above fixed points by linear stability analysis.
Here we set $\beta > 0, \Gamma = 1$. In case 1, the linearization of Eqs.\eqref{eq:simlfy_adynamics} and \eqref{eq:simlfy_bdynamics} is
\begin{equation}
    \frac{d}{dt}
    \begin{pmatrix}\Delta a^{\left(\alpha\right)}\\\Delta b^{\left(\alpha\right)}\end{pmatrix} = \begin{pmatrix}-\beta & s_\alpha \\s_\alpha & -\beta \end{pmatrix} \begin{pmatrix} \Delta a^{\left(\alpha\right)}\\\Delta b^{\left(\alpha\right)}
    \end{pmatrix},
\end{equation} 
where $\Delta a$ and $\Delta b$ are deviations from the fixed point a and b, respectively.
Eigenvalues of the linearized operator $\big(\begin{smallmatrix}
  -\beta & s_\alpha\\
  s_\alpha & -\beta
\end{smallmatrix}\big)$ are $-\beta-s_\alpha$ and $s_\alpha-\beta$. Thus, when $\lvert s_{\alpha}\rvert < \beta$, the fixed point $(0,0)$ is linearly stable.
For case 2, the linearized operator has the form
$\big(\begin{smallmatrix} s_\alpha & s_\alpha+2\left(-s_\alpha-\beta\right)\\s_\alpha+2\left(-s_\alpha-\beta\right) & s_\alpha \end{smallmatrix}\big)$.
The eigenvalues are $-2\beta$ and $2\left(s_\alpha+\beta\right)$. When $\left(s_\alpha+\beta \right) < 0$, i.e. $s_\alpha<-\beta$, the fixed point $a = -b$ is linearly stable.
For case 3, the eigenvalues of the linearized operator $\big(\begin{smallmatrix} -s_\alpha & s_\alpha-2\left(s_\alpha-\beta\right)\\s_\alpha-2\left(s_\alpha-\beta\right) & -s_\alpha  \end{smallmatrix}\big)$ are $-2\beta$ and $-2\left(s_\alpha-\beta\right)$. Thus, when $s_\alpha > \beta$, the fixed point $a = b$ is linearly stable. We show the dynamics of Eqs.\eqref{eq:simlfy_adynamics} and \eqref{eq:simlfy_bdynamics} in the above three cases in Fig.\ref{fig:vector_fields_3_figs}, .

\begin{figure}
     \centering
      \begin{subfigure}[b]{0.325\textwidth}
         \centering
         \includegraphics[width=\textwidth]{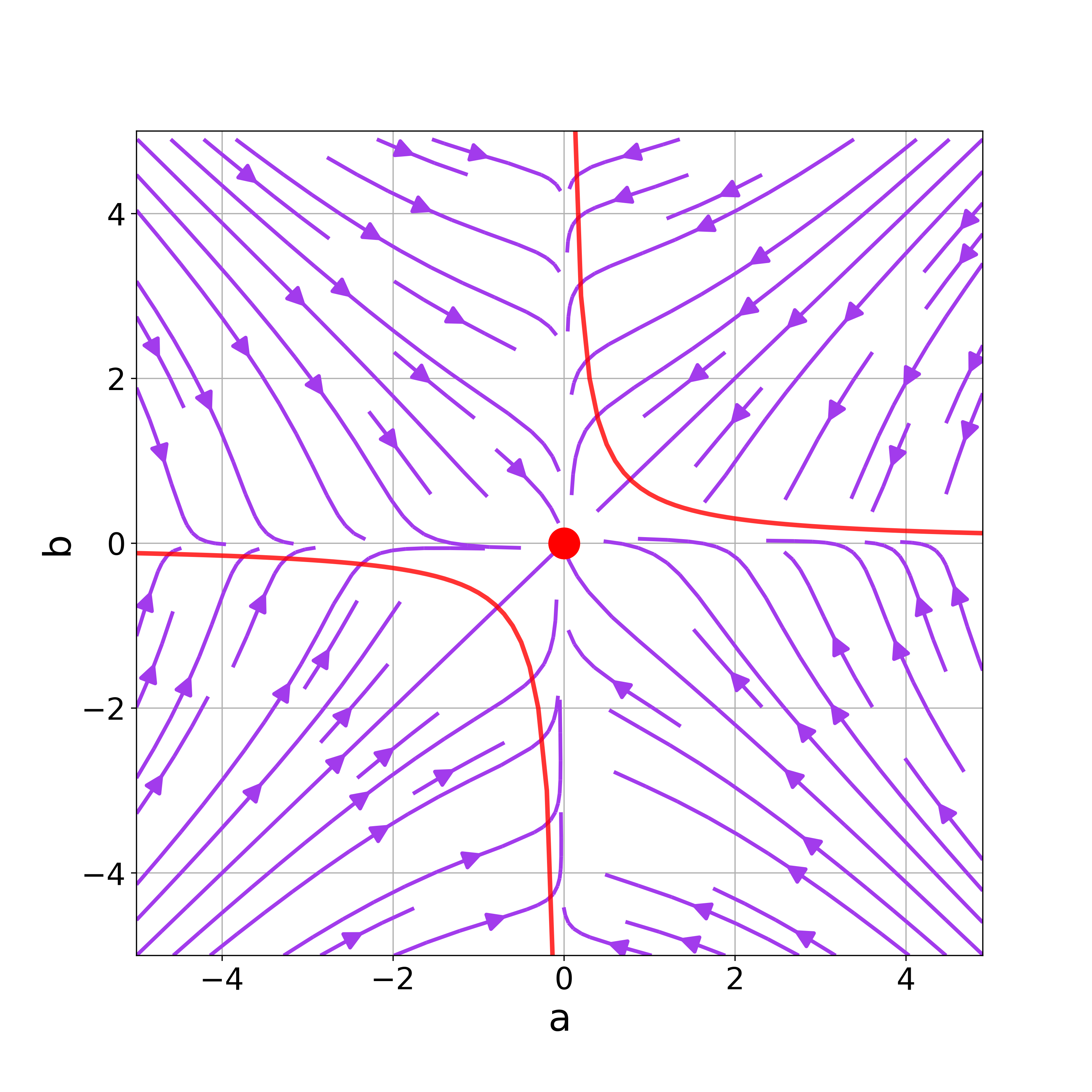}
         \caption{}
         \label{fig:vector_field_origin_point}
     \end{subfigure}
     \hfill
     \begin{subfigure}[b]{0.325\textwidth}
         \centering
         \includegraphics[width=\textwidth]{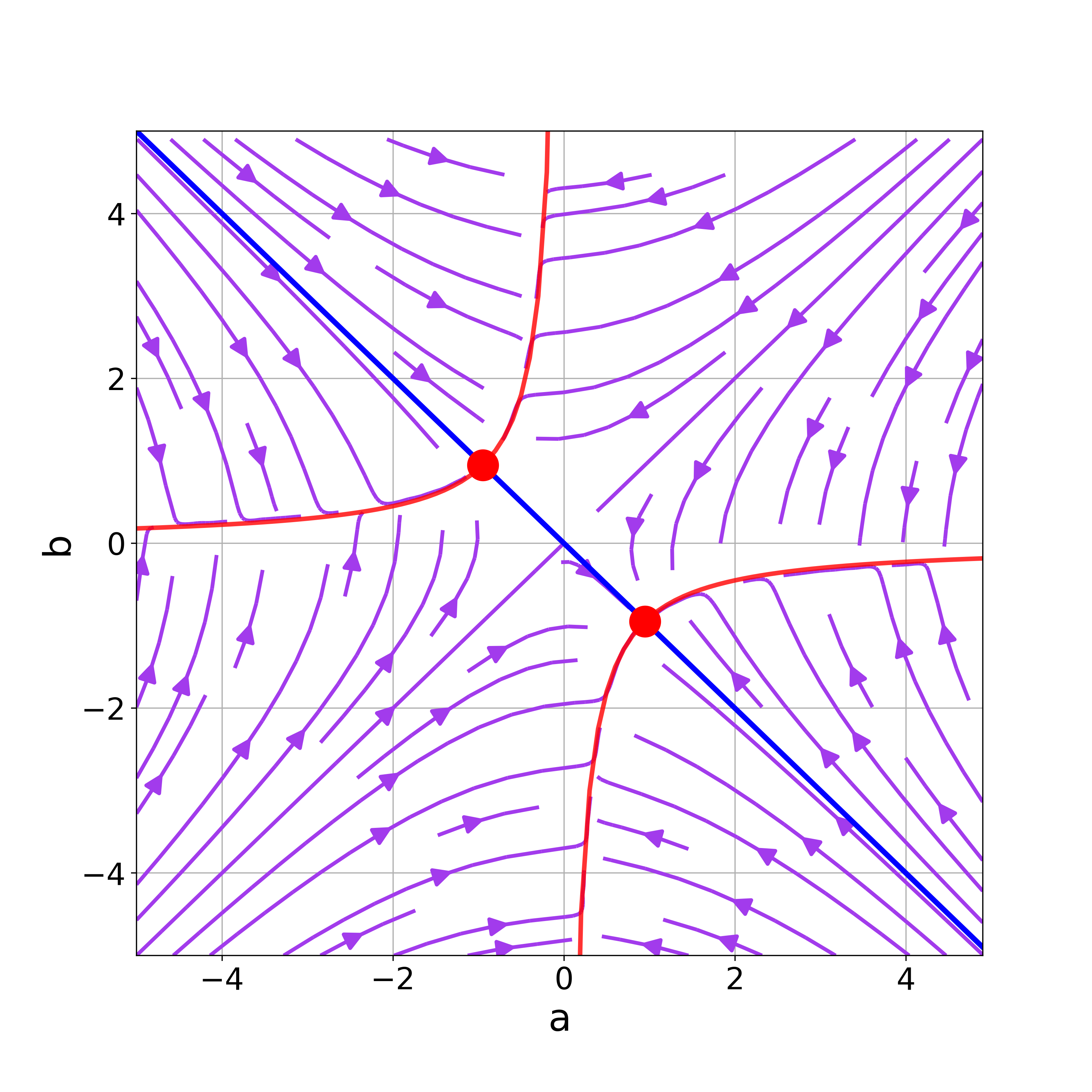}
         \caption{}
         \label{fig:vector_field_s=-1}
     \end{subfigure}
     \hfill
     \begin{subfigure}[b]{0.325\textwidth}
         \centering
         \includegraphics[width=\textwidth]{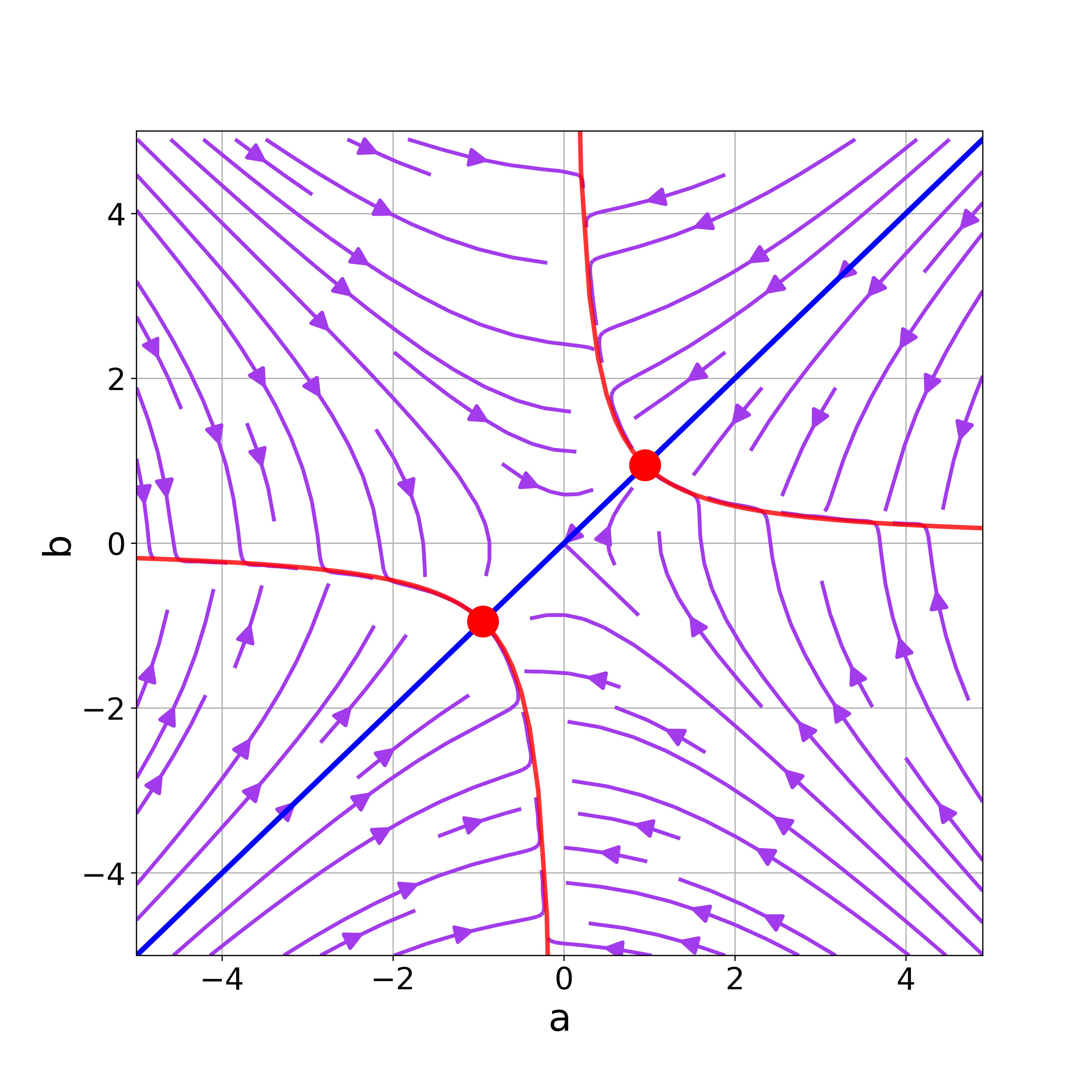}
         \caption{}
         \label{fig:vector_field_s=1}
     \end{subfigure}
        \caption{Vector field defined by Eqs.\eqref{eq:simlfy_adynamics} and \eqref{eq:simlfy_bdynamics}. (a) Case 1: vector field with $\beta=0.5, s=0.1, \Gamma = 1.$ (b) Case 2: vector field with $\beta=0.1, s=-1, \Gamma = 1.$ (c) Case 3: vector field with $\beta=0.1, s=1, \Gamma = 1.$ The red curves indicate $s = ab \pm \beta$, and the blue straight lines show $a = \pm b$. The red points are the stable fixed points.}
        \label{fig:vector_fields_3_figs}
\end{figure}

In summary, for any $\alpha$, when $\lvert s_{\alpha}\rvert > \beta$, the nontrivial fixed points will be linearly stable. Especially, $\bm{a}^{\alpha}$ = $\text{sgn}(s_\alpha) \bm{b}^{\alpha}$.
In contrast, when $\lvert s_{\alpha}\rvert < \beta$, $\bm{a}^\alpha$ and $\bm{b}^\alpha$ will converge to zero. 

In the nontrivial case, we have $\Bar{W}^{21} = \text{sgn}(S^{31})(\Bar{W}^{32})^{T}$.  Furthermore, by the assumption that the columns of $\Bar{W}^{21}$ and the rows of $W^{32}$ satisfy $\bm{a}^\alpha \mathop{//} \bm{b}^\alpha \mathop{//}\bm{\gamma}^\alpha, \alpha = 1,\cdots,N_2$, we can write $\Bar{W}^{21}$ and $\Bar{W}^{32}$ according to their converged magnitudes $(a^{(\alpha)},b^{(\alpha)})$ in Case 2 and 3 as
\begin{equation}
    \Bar{W}^{21} = \pm R\sqrt{\text{sgn}(S^{31})S^{31}-\beta},
\end{equation}
\begin{equation}
    \Bar{W}^{32} = \pm\text{sgn}(S^{31}) \sqrt{\text{sgn}(S^{31})S^{31}-\beta} R^T,
\end{equation}
where $R$ is the orthogonal matrix whose $\alpha$-th column is $\bm{\gamma}_\alpha, \alpha = 1,\cdots,N_2.$
Thus  $W^{21}$ and $W^{32}$ satisfies Eqs.\eqref{eq:relation_w21_w32}-\eqref{eq:fixed_point_w32}.
\end{proof}

Consider a linear PDE with some symmetry properties. Suppose the linear PDE $\cL u(x,t)=0$, where $\cL$ is a differential operator, is defined on a periodic spatial domain with period $L$, and $t\geq 0$. The initial condition is $u(x,0)=g(x)$.
The fundamental solution $\phi(x,x';t,t')$ of this PDE  satisfies
\begin{equation}\label{app:fundamental_sol}
\begin{split} 
        \cL \phi(x,x';t,t') &= 0, \\
    \text{s.t.} \quad \phi(x,x';t',t')&=\delta (x-x'), 
\end{split}
\end{equation}
where $\delta$ is the Dirac Delta function. Consider the following symmetry properties of the PDE.
\begin{itemize}
    \item \textbf{Spatial translational symmetry}. Then  we can derive $\cL u(x+\delta x,t)=0$ from $\cL u(x,t)=0$, where $\delta x$ is an arbitrary distance, and the fundamental solution $\phi$ given in Eq.\eqref{app:fundamental_sol} satisfies $\phi(x,x';t,t') = \phi(x-x';t,t')$.
    \item \textbf{Temporal translational symmetry}. We can derive $\cL u(x,t+\delta t)=0$ from $\cL u(x,t)=0$, where $\delta t$ is an arbitrary time, and the fundamental solution $\phi$ given in Eq.\eqref{app:fundamental_sol} satisfies $\phi(x,x';t,t') = \phi(x,x';t-t')$.
\end{itemize} 

With the spatial and temporal translational symmetry, the fundamental solution given in Eq.\eqref{app:fundamental_sol} satisfies $\phi(x,x';t',t') = \phi(x-x',t-t')$.
\begin{itemize}
    \item \textbf{Even parity symmetry}. We can derive $\cL u(-x,t)=0$ from $\cL u(x,t)=0$, and the fundamental solution $\phi$ satisfies $\phi(x-x',t-t') = \phi(x'-x,t-t')$.
    \item \textbf{Time-reversal symmetry}. We can derive $\cL u(x,-t)=0$ from $\cL u(x,t)=0$, and the fundamental solution $\phi$ satisfies $\phi(x-x',t-t') = \phi(x-x',t'-t)$.
\end{itemize}

The following proposition summarizes the behaviors of the weights in the linear Autoflow when it is applied to solve a linear PDE with some symmetry properties. Especially, under some symmetries of the PDE, the weights between different layers in the network are equal. 

\begin{proposition}[] \label{thm:pde_condition}
    Consider the initial value problem of a linear PDE with periodic boundary condition in spatial domain as described above. The spatial coordinate $x$ is discretized into  $N_1$ uniform grid points, and the initial condition $g(x)$ of the PDE on the discrete mesh satisfies the covariance matrix $\Sigma^{11}=I$. If this linear PDE satisfies the spatial and temporal translational symmetries, and the even parity symmetry ( or the time-reversal symmetry), then $\Sigma^{31}$, the input-output cross-covariance matrix between $u(x_j,0)$ and $u(x_k,T)$ where $j,k = 1,2,\cdots,N_1$, is symmetric up to the leading order of $\Delta x$, where $\Delta x$ is the grid constant. As a result, we can specify $U^{33} = V^{11}$, thus $W^{21}$ and $ W^{32}$ in the linear Autoflow for solving this PDE satisfy $W^{21} = \text{sgn}(S^{31}) (W^{32})^{T}$. Moreover, when $W^{32}$ is symmetric, we have $W^{21} = \text{sgn}(S^{31}) W^{32}$.
\end{proposition}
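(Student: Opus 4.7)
The plan is to reduce the claim to Proposition~\ref{thm:weight_relation} by first showing that the input--output cross-covariance $\Sigma^{31}$ is symmetric up to leading order in $\Delta x$. I would begin by representing the evolution of the random initial data $g(x)=u(x,0)$ through the fundamental solution on the discretized periodic grid: $u(x_k,T)\approx\sum_{l}\phi(x_k,x_l;T,0)\,g(x_l)\,\Delta x$. Together with the hypothesis $\Sigma^{11}=I$, i.e., $\bbE[g(x_l)g(x_j)]=\delta_{lj}$, this yields
\[
\Sigma^{31}_{kj}=\bbE\bigl[u(x_k,T)\,u(x_j,0)\bigr]=\phi(x_k,x_j;T,0)\,\Delta x+O\bigl((\Delta x)^{2}\bigr).
\]

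Next I would apply the two translational symmetries to collapse the Green's function to a single-variable kernel, $\phi(x_k,x_j;T,0)=\phi(x_k-x_j,T)$, so that to leading order $\Sigma^{31}$ is the circulant matrix generated by $K(y):=\phi(y,T)\,\Delta x$. Symmetry of $\Sigma^{31}$ is then equivalent to $K(-y)=K(y)$. Under even parity this is immediate from the stated identity $\phi(x-x',t-t')=\phi(x'-x,t-t')$. The time-reversal alternative is the main obstacle: the identity $\phi(y,s)=\phi(y,-s)$ alone does not obviously yield evenness in space, so I would pass to the Fourier side and argue that translation invariance combined with time reversal forces the dispersion symbol to depend only on even combinations of the wavenumber $\xi$, from which $\hat K(\xi)=\hat K(-\xi)$ and therefore $K(-y)=K(y)$ follow. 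Making this rigorous for a general $\cL$ satisfying only the listed symmetries, without auxiliary structural assumptions on the PDE, is the delicate step.

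Once $\Sigma^{31}$ is established as symmetric (to leading order), I would invoke the spectral theorem to write $\Sigma^{31}=Q\Lambda Q^{T}$ with $Q$ orthogonal and $\Lambda$ a possibly signed diagonal matrix. Reading this as a signed SVD $\Sigma^{31}=U^{33}S^{31}(V^{11})^{T}$ with $U^{33}=V^{11}=Q$, and absorbing signs into $\mathrm{sgn}(S^{31})$ as is already the convention in Proposition~\ref{thm:weight_relation}, substitution into Eq.~\eqref{eq:relation_w21_w32} collapses the orthogonal factor $U^{33}(V^{11})^{T}$ to the identity, giving $W^{21}=\mathrm{sgn}(S^{31})\,(W^{32})^{T}$. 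The symmetric-$W^{32}$ conclusion is then a one-line consequence: $(W^{32})^{T}=W^{32}$ yields $W^{21}=\mathrm{sgn}(S^{31})\,W^{32}$.
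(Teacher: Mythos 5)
Most of your proposal coincides with the paper's own proof: the leading-order computation $\Sigma^{31}_{jk}\approx\phi(x_j,x_k;T,0)\,\Delta x$ using $\Sigma^{11}=I$ (this is exactly Eq.~\eqref{eq:correlation}), the collapse to $\phi(x_j-x_k,T)$ via the two translational symmetries, the even-parity case, and the final step of specifying $U^{33}=V^{11}$ and substituting into Proposition~\ref{thm:weight_relation} are all precisely what the paper does. The genuine gap is the time-reversal branch, which you explicitly leave open: the proposition is stated with even parity \emph{or} time reversal, so that branch must be handled, and your proposed Fourier-symbol route is not carried out. It is also shakier than needed: for the first-order-in-time evolutions for which the normalization $\phi(x,x';t',t')=\delta(x-x')$ makes sense, invariance of a translation-invariant equation under $t\mapsto-t$ constrains the symbol so strongly that an argument "time reversal plus translation invariance forces an even symbol" risks being either vacuous or dependent on extra structural hypotheses, exactly the difficulty you flagged.

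The paper closes this branch without any Fourier analysis, by a short manipulation at the level of covariances: since the cross-covariance of two scalar samples is symmetric in its arguments, $\langle u(x_j,0),u(x_k,T)\rangle=\langle u(x_k,T),u(x_j,0)\rangle$; repeating the leading-order computation with the two time levels exchanged (propagating from time $T$ back to time $0$ with the backward kernel) identifies the right-hand side with $\phi(x_k-x_j,-T)\,\Delta x$, and the time-reversal property of the fundamental solution, $\phi(y,-T)=\phi(y,T)$, then yields $\phi(x_j-x_k,T)=\phi(x_k-x_j,T)$, i.e., symmetry of $\Sigma^{31}$ to leading order. (That identification is itself a formal step, implicitly reusing the delta-correlation of the data, so the paper argues at a comparable level of rigor to the rest of its appendix; but it is the step your proposal is missing.) Once this branch is supplied, your remaining argument --- the signed SVD with $U^{33}=V^{11}$, substitution into Eq.~\eqref{eq:relation_w21_w32}, and the one-line symmetric-$W^{32}$ corollary --- is the same as the paper's.
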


\begin{proof}
    By Eq.\eqref{eq:svd}, $U^{33}$ and $V^{11}$ are determined by the input-output cross-covariance matrix $\Sigma^{31}$. In the case being considered, the input and output data are the solutions of the linear PDE at different time levels. Assume that the input data is the solution of the linear PDE at initial time $t=0$: $u\left(x,0\right)=g\left(x\right)$, where $x$ is spatial coordinate. Ground truth at time T is the solution $u\left(x, T\right)$. The cross-covariance between $u(x_j,t_1)$ and $u(x_k,t_2)$, where $j,k = 1,2,\cdots,N_1$ and $t_1, t_2 \in [0,T] $ is $\left< u(x_j,t_1), u(x_k,t_2)\right > = \frac{1}{M}\sum_{i = 1}^{M}u(x_j,t_1)^{(i)}u(x_k,t_2)^{(i)}$, where $M$ is the number of data.  Specifically, the cross-covariance matrix between $u(x_j,0)$ and $u(x_k,T)$ is
\begin{equation} \label{eq:correlation}
\begin{aligned}
       \Sigma^{31}_{jk} = & \left<  u(x_j,0),u(x_k,T)\right>\\
 \approx & \left< g(x_j),\sum_{\ell=1}^{N_1}\phi(x_\ell,x_k;T,0)g(x_\ell)\Delta x\right >  \\
 =&\sum_{\ell=1}^{N_1}\phi(x_\ell,x_k;T,0)\left< g(x_j),g(x_\ell)\right > \Delta x \\
 =& \sum_{\ell=1}^{N_1} \phi(x_\ell,x_k;T,0)\delta_{jl}\Delta x   \\
 =&\phi(x_j,x_k;T,0) \Delta x ,  \\
\end{aligned}
\end{equation}
where $\phi$ is the fundamental solution of the linear PDE. The assumption of $\Sigma^{11} = I$, means that the initial data $g(x)$ satisfies $\left< g(x_j),g(x_\ell)\right > = 
\delta_{jl} = \left\{
\begin{array}{lr} 0, \, \text{if} \, j \neq l ,\\ 1, \,  \text{if} \, j=l. \end{array}
\right.$ Since the linear PDE satisfies the spatial and temporal translational symmetries, its fundamental solution has the property that $\phi(x_j,x_k; T,0) = \phi (x_j-x_k, T)$. 

There are two cases in which we derive the symmetry of $\Sigma^{31}$: (i) if the linear PDE satisfies the even parity symmetry, then we have $\phi (x_j-x_k, T) = \phi (x_k-x_j, T)$; (ii) if the linear PDE has time-reversal symmetry, according to the definition of cross-covariance, we have $\left<  u(x_j,0),u(x_k, T)\right> = \phi (x_j-x_k, T) = \left< u(x_k, T), u(x_j,0)\right> = \phi (x_k-x_j,-T) = \phi (x_k-x_j, T)$.
By the above conditions, we can derive that the input-output cross-covariance matrix $\Sigma^{31}_{jk}$ has the symmetric leading order behavior $\phi(x_j,x_k;T,0)\Delta x,$ where $j,k = 1,\cdots,N_1$, as $\Delta x \rightarrow 0$. Thus we can specify $U^{33} = V^{11}$. Further using Eq.\eqref{eq:relation_w21_w32}, we have $W^{21} = \text{sgn}(S^{31}) (W^{32})^{T}$. Especially, when $W^{32}$ is symmetric, i.e., $W^{32} = (W^{32})^T$, then $W^{21} = \text{sgn}(S^{31})W^{32}$.
\end{proof}

The following proposition gives a necessary condition for the intermediate output in linear Autoflow to be a true intermediate state of the solution of the linear PDE. 
\begin{proposition}\label{thm: weight_symmtric}
If the intermediate output $h$ of the two-layer linear Autoflow is the solution of the PDE in Proposition \ref{thm:pde_condition}, then $W^{32}$ and $W^{21}$ should be symmetric.
\end{proposition}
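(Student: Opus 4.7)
The plan is to deduce, from the assumption that $h$ equals the true PDE solution at some intermediate time $t_1\in(0,T)$, that both learned operators $W^{21}$ and $W^{32}$ must coincide (to leading order in $\Delta x$) with discretized fundamental-solution operators of the PDE, which are manifestly symmetric under the symmetries imposed in Proposition~\ref{thm:pde_condition}.

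First I would formalize the hypothesis: for every training input–output pair $(u_0^{(i)},u_T^{(i)})$ the hidden state satisfies $h^{(i)}(x_j)=u^{(i)}(x_j,t_1)$ at all grid points $x_j$, with the same $t_1$ across all samples (so $W^{21}$ is a single fixed matrix applied to every sample). Using the fundamental-solution representation together with the spatial and temporal translational symmetries, exactly as in Eq.~\eqref{eq:correlation}, I would write
\begin{equation*}
u^{(i)}(x_j,t_1)=\sum_{\ell=1}^{N_1}\phi(x_j-x_\ell,t_1)\,u_0^{(i)}(x_\ell)\,\Delta x+o(\Delta x),
\end{equation*}
and read off, to leading order, $W^{21}_{j\ell}=\phi(x_j-x_\ell,t_1)\,\Delta x$. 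Here the assumption $\Sigma^{11}=I$ in Proposition~\ref{thm:pde_condition} ensures that the training data span all directions in $\bbR^{N_1}$, so $W^{21}$ is pinned down uniquely rather than being only one representative of the family of minimizers described by Proposition~\ref{thm:weight_relation}.

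Next I would invoke the remaining symmetry from Proposition~\ref{thm:pde_condition}: either the even parity symmetry, which gives $\phi(x_j-x_\ell,t_1)=\phi(x_\ell-x_j,t_1)$ directly, or the time-reversal route, which combines with temporal translation to give the same identity. Either one yields $W^{21}_{j\ell}=W^{21}_{\ell j}$, so $W^{21}$ is symmetric to leading order in $\Delta x$. The identical pipeline applied to $W^{32}$ gives $W^{32}_{j\ell}=\phi(x_j-x_\ell,T-t_1)\,\Delta x$, which is symmetric by the same symmetry argument.

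The one step that I expect to require care is justifying that $W^{32}$ is likewise determined uniquely by the data, since the relevant covariance here is $\mathbb{E}[h h^\top]$ rather than the assumed $\Sigma^{11}=I$. The cleanest way around this is to argue pointwise: whenever the intermediate states $\{h^{(i)}\}$ span $\bbR^{N_2}$ (which is inherited from $\Sigma^{11}=I$ together with invertibility of the fundamental-solution operator on $[0,t_1]$), the equations $W^{32}h^{(i)}=u^{(i)}(\cdot,T)$ admit a unique solution, and that unique solution must coincide with the discretized fundamental-solution operator on $[t_1,T]$. Bypassing the SVD-based description of Proposition~\ref{thm:weight_relation} in favor of this direct identification avoids having to assume anything about $\Sigma^{hh}$, while still delivering symmetry of $W^{32}$ from the even parity or time-reversal symmetry of $\phi$.
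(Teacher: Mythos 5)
Your argument is correct (at the same level of rigor as the paper), but it takes a genuinely different route from the paper's proof. The paper stays inside its covariance/SVD framework: since $\Sigma^{11}=I$, the input--intermediate cross-covariance satisfies $\Sigma^{21}=W^{21}\Sigma^{11}=W^{21}$, so applying the argument of Proposition~\ref{thm:pde_condition} to the pair $(u_0,h)$ on the interval $[0,t_1]$ immediately makes $W^{21}$ symmetric; symmetry of $W^{32}$ is then read off from the converged-weight parametrization of Proposition~\ref{thm:weight_relation} by taking $R=U^{33}=V^{11}$. You instead bypass Proposition~\ref{thm:weight_relation} entirely and identify both matrices pointwise with discretized propagators, $W^{21}_{j\ell}=\phi(x_j-x_\ell,t_1)\Delta x$ and $W^{32}_{j\ell}=\phi(x_j-x_\ell,T-t_1)\Delta x$, exactly in the spirit of Eq.~\eqref{eq:correlation}, and then invoke the spatial symmetry of $\phi$. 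What your route buys: it does not rely on the gradient-flow and orthogonal-initialization assumptions behind Proposition~\ref{thm:weight_relation}, it makes the physical meaning of the weights explicit, and it derives the symmetry of $W^{32}$ as genuinely forced rather than as a particular choice of the free matrix $R$ (which is the weakest step of the paper's own proof). What it costs: you need two ingredients that are not literally in the hypothesis, namely that the output layer exactly reproduces the targets, $W^{32}h^{(i)}=u^{(i)}(\cdot,T)$ (with the regularization $\beta>0$ of Eq.~\eqref{eq:optimize_loss} this holds only approximately, so your identification of $W^{32}$ is exact only in the idealized $\beta\to 0$ limit), and that the intermediate states span, i.e.\ invertibility of the discretized propagator on $[0,t_1]$ --- an assumption the paper avoids because Proposition~\ref{thm:weight_relation} already encodes the training. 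Also, your remark that time-reversal ``combines with temporal translation'' to give $\phi(x_j-x_\ell,t_1)=\phi(x_\ell-x_j,t_1)$ is stated without derivation; the paper obtains this spatial symmetry through the cross-covariance identity in the proof of Proposition~\ref{thm:pde_condition}, and you should route through that same argument rather than assert it. These are caveats of presentation, not flaws in the overall logic.
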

\begin{proof}
If the intermediate output $h$ of the linear Autoflow is the solution of the linear PDE in Proposition \ref{thm:pde_condition}, then the conclusion in Proposition \ref{thm:pde_condition} holds for $h$. That means the input-intermediate output cross-covariance matrix $\Sigma^{21} = \frac{1}{M}\sum_{i = 1}^{M}h^{(i)}u_0^{(i)T}$ should be symmetric. By specifying $R=U^{33}=V^{11}$, $W^{32}$ and $W^{21}$ are symmetric.

\end{proof}

\section{A Numerical Scheme of Solving Allen-Cahn Equation}\label{app:numerical_ac}
Numerically, the operator splitting method is commonly used to solve the Allen-Cahn equation \eqref{eqn:allen_eq}. Denoted $u_n = u\left(\cdot, t_n\right)$, and $t_n = n\tau$ where $\tau$ is the time step. In each time step, the symmetrized Strang splitting scheme \cite{strang1968construction} for solving the Allen-Cahn equation consists the following three sub equations successively,
\begin{align}
 &\partial_t u_1 =\epsilon^2 \nabla^2 u_1 \tag{\ref{eqn:allen_eq}{a}} \,\,\, \text{in}\, \left(t_n,t_{n+1/2}\right), \, u_1\left(t_n\right) = u_n \rightarrow u_1^\star = u_1\left(t_{n+1/2}\right)  \label{eqn:allen_eqa} \\
 &\partial_t u_2 =-\left(u_2^3-u_2\right) \tag{\ref{eqn:allen_eq}{b}} \,\,\, \text{in}\, \left(t_n,t_{n+1}\right), \, u_2\left(t_n\right) = u_1^\star \rightarrow u_2^\star = u_2\left(t_{n+1}\right) \label{eqn:allen_eqb} \\
 &\partial_t u_3 =\epsilon^2 \nabla^2 u_3 \tag{\ref{eqn:allen_eq}{c}} \,\,\, \text{in}\, \left(t_{n+1/2},t_{n+1}\right),\, u_3\left(t_{n+1/2}\right) = u_2^\star \rightarrow u\left(t_{n+1}\right) = u_3\left(t_{n+1}\right).  \label{eqn:allen_eqc} 
\end{align}
The diffusion equations \eqref{eqn:allen_eqa} and \eqref{eqn:allen_eqc} can be solved by the fast Fourier transform (FFT) method with computational complexity $O\left(N^2\text{log}N\right)$, where $N^2$ is number of discretized grid in the two dimensional spatial domain. The second nonlinear equation \eqref{eqn:allen_eqb} can be solved analytically as,
\begin{equation} \label{eq: ac-nonlinear}
	 u_2\left(t_{n+1}\right) = u_1^\star \Big{/}\sqrt{e^{-2\tau}+\left(1-e^{-2\tau}\right)\left(u_1^\star\right)^2}. 
\end{equation}
The truncated error of this numerical scheme is $O\left(\tau^2\right)$. In order to reduce the error, the time step $\tau$ has to be very small, leading to a large number  of steps. The FFT method in equation \eqref{eqn:allen_eqa} and \eqref{eqn:allen_eqc} gives the largest contribution to the computational complexity during the numerical simulation.

\section{Optical transmission system} \label{app:nlse}
\begin{figure}[h]
	\centering
	\includegraphics[width=0.8\linewidth]{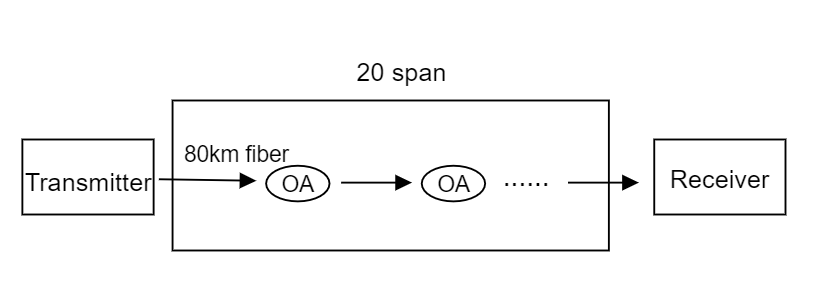}
	\caption{An optical transmission system (signal channel). There are three
basic components in an optical transmission system: transmitter, optical fiber,
and receiver. For the optical fiber with 20 spans, each span of fiber is 80 km and contains an inline optical amplifier (OA). }
	\label{fig:fibertransmission}
\end{figure}
Here we give a more detailed description of the optical transmission system \cite{agrawal_fiber-optic_2010} which is studied as an application of DOSnet in section \ref{sec:nlse}. In our experiments, we only consider the single channel system. 
There are three basic components in an optical transmission system: transmitter, optical fiber, and receiver. In the beginning, the electrical signal is converted into optical signal at the transmitter. Then the optical signal propagates through the fiber for a long distance. Finally it arrives at the optical receiver. The received signal is then converted back to electrical domain and is processed by digital signal processor (DSP). The processed signal is used for information detection. 

The clean signal at transmitter propagate through 20 spans optical fiber and arrived at the receiver. Each span of fiber is 80 km and contains an inline optical amplifier (OA). OA induces  amplified spontaneous emission noise. This noise, together with the three factors in the fiber, loss, dispersion, and nonlinear interaction, cause the signal to be distorted inside optical fiber. 
In order to recovery the original information of this signal, nonlinear compensation is required before the decision procedure is applied. The whole pipeline is shown in Fig.\ref{fig:fibertransmission}. 

Next, we describe each component of the system in details.

\textbf{Signal generated at transmitter.} 
The input signal at the transmitter takes the form of
\begin{align}\label{eq:input_signal}
    u_0(t) = \sqrt{P_L}\sum_{n} a_n h(t-n T_s),
\end{align}
where $P_L$ is the called launch power. We choose launch power as $0$ dBm here, i.e., $P_L = 10^{-3}$ W. Each $a_n\in\cC$ represents a symbol and $h$ is the root-raised-cosine filter. Here $\cC:=\{\pm(2k+1)\pm i (2l+1)\}_{0\le k,l\le1}\subset\bbC$ is a set of 16 grid points, called 16QAM constellation. The real function $h$ is called root-raised-cosine filter, which is defined by

\begin{equation}\label{pulse-shape}
 h\left(t\right)=\left\{
\begin{aligned} 
	&\frac{1}{T_s}\left(1+\rho\left(\frac{4}{\pi}-1\right)\right), \quad &t= 0 \\
	&\frac{\rho}{T_s\sqrt{2}}[\left(1+\frac{2}{\pi}\right)\sin\left(\frac{\pi}{4\rho}\right)+\left(1-\frac{2}{\pi}\right)\cos\left(\frac{\pi}{4\rho}\right)],\quad &t=\pm \frac{T_s}{4\rho} \\
	&\frac{1}{T_s}\frac{\sin[\pi\frac{t}{T_s}\left(1-\rho\right)]+4\rho\frac{t}{T_s}\cos[\pi\frac{t}{T_s}\left(1+\rho\right)]}{\pi\frac{t}{T_s}[1-\left(4\rho\frac{t}{T_s}\right)^2]}, \quad &\text{otherwise}
\end{aligned}
\right.
\end{equation}
where $\rho=0.1$ and $T_s$ is the reciprocal of the symbol-rate. 

\textbf{Signal propagation at fiber.} Optical fiber is a fundamental component in optical transmission systems. Our 1600 km transmission system consists of 20 spans of 80 km fibers. At the end of each span, optical amplifier (OA) is used to exactly recover the amplitude of the signal. Besides, OA also add to the signal a white Gaussian noise with zero mean and variation $\sigma^2 = Fh\nu_0\left(G-1\right)\Delta\nu$, where $h$ is the Planck's constant, $\nu_0$ denotes the carrier frequency, and $G$ is the gain of amplifier. Here we choose noise figure $F=4.5$ dB, $\Delta\nu = 50 \text{GHz}$ where samples per symbol (sps) is 4.

\textbf{Signal received at receiver.} When the signal arrives at the receiver, a series of operations are performed  to recover the data being transimitted. The incoming signal is downsampled and preamplified by the amplifier first. Then it passes through a matched filter with the same impulse responses at Eq.\eqref{pulse-shape} to reduce the noise. Next, in order to mitigate the distortion caused by transimission, an efficient recovery algorithm is needed. Our DOSnet is proposed at this end to compensates the linear and nonlinear distortion. 

Finally, the signal is downsampled again such that each symbol contains only one sample, and a classification/decision is carried out for each sample according to its distance to the points in the standard constellation $\cC$. Then the signal is classified into 16 grid points on the constellation. Furthermore, those grid points on the constellation are converted to a binary sequence by Gray code (See Fig.\ref{fig:graycode}). The original signal is recovered. \\
\begin{figure}[H]
    \centering
    \includegraphics[width = 0.65\linewidth]{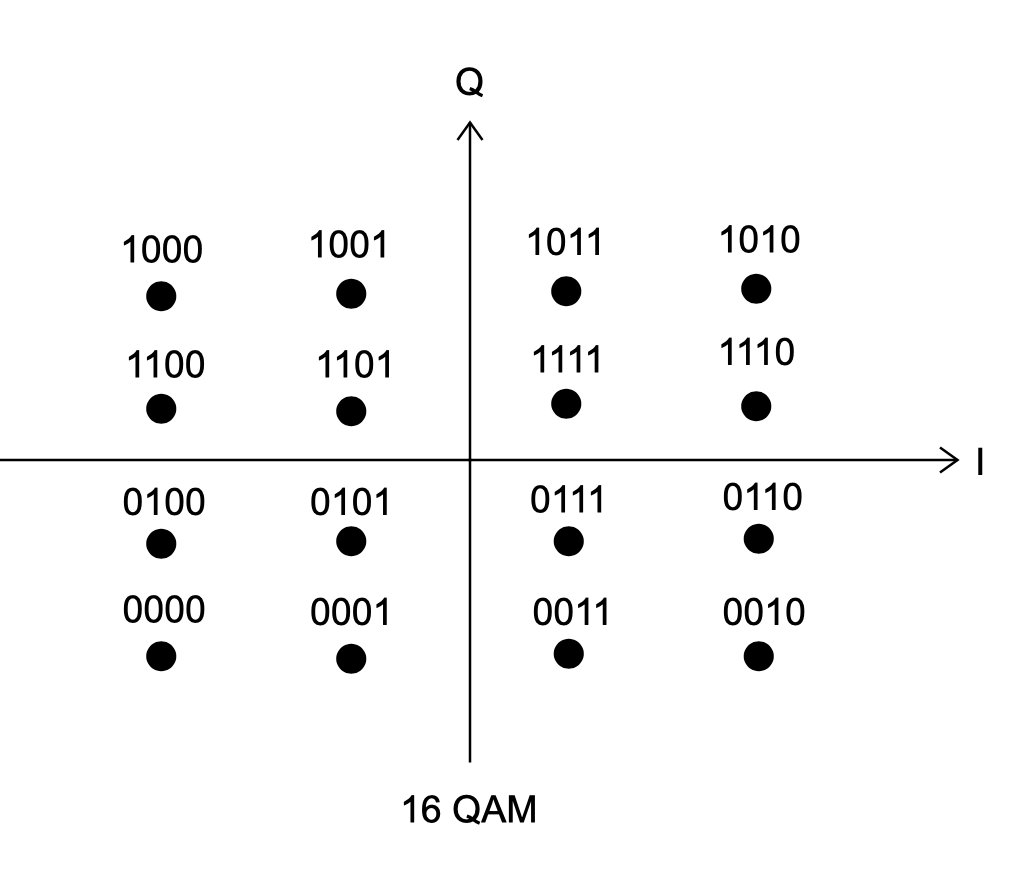}
    \caption{16 grids points on 16 QAM constellation and their corresponding gray codes.}
    \label{fig:graycode}
\end{figure}

\bibliographystyle{unsrt}  
\bibliography{main}

\begin{thebibliography}{10}

\bibitem{Li2008ElectronicPO}
Xiaoxu Li, Xin Chen, Gilad Goldfarb, Eduardo~F. Mateo, Inwoong Kim, Fatih
  Yaman, and Guifang Li.
\newblock Electronic post-compensation of wdm transmission impairments using
  coherent detection and digital signal processing.
\newblock {\em Optics express}, 16 2:880--8, 2008.

\bibitem{Ip2008CompensationOD}
Ezra Ip and Joseph~M. Kahn.
\newblock Compensation of dispersion and nonlinear impairments using digital
  backpropagation.
\newblock {\em Journal of Lightwave Technology}, 26:3416--3425, 2008.

\bibitem{Asif2011DigitalBP}
Rameez Asif, Chien-Yu Lin, and Bernhard Schmauss.
\newblock Digital backward propagation: A technique to compensate fiber
  dispersion and non-linear impairments.
\newblock 2011.

\bibitem{Weideman1986SplitstepMF}
J.~Andre Weideman and Ben~M. Herbst.
\newblock Split-step methods for the solution of the nonlinear schro¨dinger
  equation.
\newblock {\em SIAM Journal on Numerical Analysis}, 23:485--507, 1986.

\bibitem{Hardin1973ApplicationOT}
Ronald~H. Hardin.
\newblock Application of the split-step fourier method to the numerical
  solution of nonlinear and variable coefficient wave equations.
\newblock {\em Siam Review}, 15:423, 1973.

\bibitem{strang1968construction}
Gilbert Strang.
\newblock On the construction and comparison of difference schemes.
\newblock {\em SIAM journal on numerical analysis}, 5(3):506--517, 1968.

\bibitem{Du2010ImprovedSC}
Liang~B. Du and Arthur~James Lowery.
\newblock Improved single channel backpropagation for intra-channel fiber
  nonlinearity compensation in long-haul optical communication systems.
\newblock {\em Optics express}, 18 16:17075--88, 2010.

\bibitem{Lin2010CompensationOT}
Chien-Yu Lin, Michael Holtmannspoetter, M.~Rameez Asif, and Bernhard Schmauss.
\newblock Compensation of transmission impairments by digital backward
  propagation for different link designs.
\newblock {\em 36th European Conference and Exhibition on Optical
  Communication}, pages 1--3, 2010.

\bibitem{Asif2011LogarithmicSB}
Rameez Asif, Chien-Yu Lin, Michael Holtmannspoetter, and Bernhard Schmauss.
\newblock Logarithmic step-size based digital backward propagation in n-channel
  112gbit/s/ch dp-qpsk transmission.
\newblock {\em 2011 13th International Conference on Transparent Optical
  Networks}, pages 1--4, 2011.

\bibitem{Fougstedt2017TimedomainDB}
Christoffer Fougstedt, Mikael Mazur, Lars~J. Svensson, Henrik Eliasson, Magnus
  Karlsson, and Per Larsson-Edefors.
\newblock Time-domain digital back propagation: Algorithm and finite-precision
  implementation aspects.
\newblock {\em 2017 Optical Fiber Communications Conference and Exhibition
  (OFC)}, pages 1--3, 2017.

\bibitem{Li2011ImplementationEN}
Lei Li, Zhenning Tao, Liang Dou, Weizhen Yan, Sho ichiro Oda, Takahito
  Tanimura, Takeshi Hoshida, and Jens~C. Rasmussen.
\newblock Implementation efficient nonlinear equalizer based on correlated
  digital backpropagation.
\newblock {\em 2011 Optical Fiber Communication Conference and Exposition and
  the National Fiber Optic Engineers Conference}, pages 1--3, 2011.

\bibitem{Yan2011LowCD}
Weizhen Yan, Zhenning Tao, Liang Dou, Lei Li, Sho ichiro Oda, Takahito
  Tanimura, Takeshi Hoshida, and Jens~C. Rasmussen.
\newblock Low complexity digital perturbation back-propagation.
\newblock {\em 2011 37th European Conference and Exhibition on Optical
  Communication}, pages 1--3, 2011.

\bibitem{Liang2015CorrelatedDB}
Xiaojun Liang and Shiva Kumar.
\newblock Correlated digital back propagation based on perturbation theory.
\newblock {\em Optics express}, 23 11:14655--65, 2015.

\bibitem{Rafique2011ImpactOS}
Danish Rafique and Andrew~D. Ellis.
\newblock Impact of signal-ase four-wave mixing on the effectiveness of digital
  back-propagation in 112 gb/s pm-qpsk systems.
\newblock {\em Optics express}, 19 4:3449--54, 2011.

\bibitem{Beck2020AnOO}
Christian Beck, Martin Hutzenthaler, Arnulf Jentzen, and Benno Kuckuck.
\newblock An overview on deep learning-based approximation methods for partial
  differential equations.
\newblock {\em ArXiv}, abs/2012.12348, 2020.

\bibitem{Raissi2019PhysicsinformedNN}
Maziar Raissi, Paris Perdikaris, and George~Em Karniadakis.
\newblock Physics-informed neural networks: A deep learning framework for
  solving forward and inverse problems involving nonlinear partial differential
  equations.
\newblock {\em J. Comput. Phys.}, 378:686--707, 2019.

\bibitem{Lu2019DeepONetLN}
Lu~Lu, Pengzhan Jin, and George~Em Karniadakis.
\newblock Deeponet: Learning nonlinear operators for identifying differential
  equations based on the universal approximation theorem of operators.
\newblock {\em ArXiv}, abs/1910.03193, 2019.

\bibitem{Han2018SolvingHP}
Jiequn Han, Arnulf Jentzen, and Weinan E.
\newblock Solving high-dimensional partial differential equations using deep
  learning.
\newblock {\em Proceedings of the National Academy of Sciences}, 115:8505 --
  8510, 2018.

\bibitem{Li2021FourierNO}
Zong-Yi Li, Nikola~B. Kovachki, Kamyar Azizzadenesheli, Burigede Liu, Kaushik
  Bhattacharya, Andrew Stuart, and Anima Anandkumar.
\newblock Fourier neural operator for parametric partial differential
  equations.
\newblock {\em ArXiv}, abs/2010.08895, 2021.

\bibitem{hastie2009elements}
Trevor Hastie, Robert Tibshirani, Jerome~H Friedman, and Jerome~H Friedman.
\newblock {\em The elements of statistical learning: data mining, inference,
  and prediction}, volume~2.
\newblock Springer, 2009.

\bibitem{fujisawa2021nonlinear}
Shinsuke Fujisawa, Fatih Yaman, Hussam~G Batshon, Massaki Tanio, Naoto Ishii,
  Chaoran Huang, Thomas~Ferreira De~Lima, Yoshihisa Inada, Paul~R Prucnal,
  Norifumi Kamiya, et~al.
\newblock Nonlinear impairment compensation using neural networks.
\newblock In {\em Optical Fiber Communication Conference}, pages M5F--1.
  Optical Society of America, 2021.

\bibitem{zhao2020low}
Yan Zhao, Xue Chen, Tao Yang, Liqian Wang, Danshi Wang, Zhiguo Zhang, and
  Sheping Shi.
\newblock Low-complexity fiber nonlinearity impairments compensation enabled by
  simple recurrent neural network with time memory.
\newblock {\em IEEE Access}, 8:160995--161004, 2020.

\bibitem{kamalov2018evolution}
Valey Kamalov, Ljupcho Jovanovski, Vijay Vusirikala, Shaoliang Zhang, Fatih
  Yaman, Kohei Nakamura, Takanori Inoue, Eduardo Mateo, and Yoshihisa Inada.
\newblock Evolution from 8qam live traffic to ps 64-qam with neural-network
  based nonlinearity compensation on 11000 km open subsea cable.
\newblock In {\em Optical Fiber Communication Conference}, pages Th4D--5.
  Optica Publishing Group, 2018.

\bibitem{hager_deep_2018}
Christian Häger and Henry~D. Pfister.
\newblock Deep {Learning} of the {Nonlinear} {Schrödinger} {Equation} in
  {Fiber}-{Optic} {Communications}.
\newblock In {\em 2018 {IEEE} {International} {Symposium} on {Information}
  {Theory} ({ISIT})}, pages 1590--1594, June 2018.
\newblock ISSN: 2157-8117.

\bibitem{Hger2018NonlinearIM}
Christian H{\"a}ger and Henry~D. Pfister.
\newblock Nonlinear interference mitigation via deep neural networks.
\newblock {\em 2018 Optical Fiber Communications Conference and Exposition
  (OFC)}, pages 1--3, 2018.

\bibitem{fan_advancing_2020}
Qirui Fan, Gai Zhou, Tao Gui, Chao Lu, and Alan Pak~Tao Lau.
\newblock Advancing theoretical understanding and practical performance of
  signal processing for nonlinear optical communications through machine
  learning.
\newblock {\em Nature Communications}, 11(1):1--11, July 2020.
\newblock Number: 1 Publisher: Nature Publishing Group.

\bibitem{quispel2002splitting}
G~Reinout~W Quispel.
\newblock Splitting methods.
\newblock {\em Acta Numerica 2002: Volume 11}, (11):341--434, 2002.

\bibitem{simonyan2014very}
Karen Simonyan and Andrew Zisserman.
\newblock Very deep convolutional networks for large-scale image recognition.
\newblock {\em arXiv preprint arXiv:1409.1556}, 2014.

\bibitem{lang2012differential}
Serge Lang.
\newblock {\em Differential and Riemannian manifolds}, volume 160.
\newblock Springer Science \& Business Media, 2012.

\bibitem{glorot2011deep}
Xavier Glorot, Antoine Bordes, and Yoshua Bengio.
\newblock Deep sparse rectifier neural networks.
\newblock In {\em Proceedings of the fourteenth international conference on
  artificial intelligence and statistics}, pages 315--323. JMLR Workshop and
  Conference Proceedings, 2011.

\bibitem{lecun2012efficient}
Yann~A LeCun, L{\'e}on Bottou, Genevieve~B Orr, and Klaus-Robert M{\"u}ller.
\newblock Efficient backprop.
\newblock In {\em Neural networks: Tricks of the trade}, pages 9--48. Springer,
  2012.

\bibitem{bonfiglioli2011topics}
Andrea Bonfiglioli and Roberta Fulci.
\newblock {\em Topics in noncommutative algebra: the theorem of Campbell,
  Baker, Hausdorff and Dynkin}, volume 2034.
\newblock Springer Science \& Business Media, 2011.

\bibitem{jing2017tunable}
Li~Jing, Yichen Shen, Tena Dubcek, John Peurifoy, Scott Skirlo, Yann LeCun, Max
  Tegmark, and Marin Solja{\v{c}}i{\'c}.
\newblock Tunable efficient unitary neural networks (eunn) and their
  application to rnns.
\newblock In {\em International Conference on Machine Learning}, pages
  1733--1741. PMLR, 2017.

\bibitem{kiani2022projunn}
Bobak Kiani, Randall Balestriero, Yann Lecun, and Seth Lloyd.
\newblock projunn: efficient method for training deep networks with unitary
  matrices.
\newblock {\em arXiv preprint arXiv:2203.05483}, 2022.

\bibitem{du2020phase}
Qiang Du and Xiaobing Feng.
\newblock The phase field method for geometric moving interfaces and their
  numerical approximations.
\newblock {\em Handbook of Numerical Analysis}, 21:425--508, 2020.

\bibitem{kao_dielectric-fibre_1966}
K.C. Kao and G.A. Hockham.
\newblock Dielectric-fibre surface waveguides for optical frequencies.
\newblock {\em Proceedings of the Institution of Electrical Engineers},
  113(7):1151--1158, July 1966.

\bibitem{agrawal_fiber-optic_2010}
Govind~P. Agrawal.
\newblock {\em Fiber-{Optic} {Communication} {Systems}}.
\newblock Wiley, New York, 4 edition edition, October 2010.

\bibitem{he_deep_2016}
K.~He, X.~Zhang, S.~Ren, and J.~Sun.
\newblock Deep {Residual} {Learning} for {Image} {Recognition}.
\newblock In {\em 2016 {IEEE} {Conference} on {Computer} {Vision} and {Pattern}
  {Recognition} ({CVPR})}, pages 770--778, June 2016.
\newblock ISSN: 1063-6919.

\bibitem{graves2005bidirectional}
Alex Graves, Santiago Fern{\'a}ndez, and J{\"u}rgen Schmidhuber.
\newblock Bidirectional lstm networks for improved phoneme classification and
  recognition.
\newblock In {\em International conference on artificial neural networks},
  pages 799--804. Springer, 2005.

\bibitem{paszke2019pytorch}
Adam Paszke, Sam Gross, Francisco Massa, Adam Lerer, James Bradbury, Gregory
  Chanan, Trevor Killeen, Zeming Lin, Natalia Gimelshein, Luca Antiga, et~al.
\newblock Pytorch: An imperative style, high-performance deep learning library.
\newblock {\em Advances in neural information processing systems}, 32, 2019.

\bibitem{saxe2013exact}
Andrew~M Saxe, James~L McClelland, and Surya Ganguli.
\newblock Exact solutions to the nonlinear dynamics of learning in deep linear
  neural networks.
\newblock {\em arXiv preprint arXiv:1312.6120}, 2013.

\end{thebibliography}

\end{document}